\titlespacing{\section}{0cm}{3.5pc}{1.5pc}
\def\@citex[#1]#2{\if@filesw\immediate\write\@auxout{\string\citation{#2}}\fi
  \def\@citea{}\@cite{\@for\@citeb:=#2\do
    {\@citea\def\@citea{\@citesep}\@ifundefined
       {b@\@citeb}{{\bf ?}\@warning
       {Citation `\@citeb' on page \thepage \space undefined}}%
{\csname b@\@citeb\endcsname}}}{#1}}
\def\@citesep{; }
\newtheoremstyle{Kang}{}{}{\itshape}{}{\bf}{}{.5em}{}
\theoremstyle{Kang}
\newtheorem{theorem}{Theorem}[section]
\newtheoremstyle{Kremark}{}{}{}{}{\bf}{}{.5em}{}
\theoremstyle{Kremark}
\newtheorem*{remark}{Remark.}
\newtheorem{defn}[theorem]{Definition}
\newtheorem{other}{}
\newenvironment{Case}[1]{\medskip {\it Case #1.}}{}
\def\fn#1{\operatorname{#1}} % function work like \sin
\def\bm#1{\mathbbm{#1}}
\title{Rational Invariants for Subgroups of $S_5$ and $S_7$}
\author{\begin{minipage}{0.37\textwidth}
Ming-chang Kang \\[2mm] \normalsize
Department of Mathematics \\
National Taiwan University \\
Taipei, Taiwan \\
E-mail: kang@math.ntu.edu.tw
\end{minipage}
and \
\begin{minipage}{0.5\textwidth}
Baoshan Wang \\[2mm] \normalsize
School of Mathematics and System Sciences \\
Beihang University \\
Beijing, China \\
E-mail: bwang@buaa.edu.cn
\end{minipage} }
\date{}
\begin{document}

\maketitle

\footnote{\textit{\!\!\!$2010$ Mathematics Subject
Classification}. 13A50, 14E08.} \footnote{\textit{\!\!\!Keywords
and phrases}. Noether's problem, rationality problem, rational
invariants.} \footnote{\textit{\!\!\!This paper was finished while
the second-named author was visiting National Taiwan University.}}

\begin{abstract}
{\noindent\bf Abstract.} Let $G$ be a subgroup of $S_n$, the
symmetric group of degree $n$. For any field $k$, $G$ acts
naturally on the rational function field $k(x_1,x_2,\ldots,x_n)$
via $k$-automorphisms defined by $\sigma\cdot x_i=x_{\sigma(i)}$
for any $\sigma\in G$, any $1\le i\le n$. Theorem. If $n\le 5$,
then the fixed field $k(x_1,\ldots,x_n)^G$ is purely
transcendental over $k$. We will show that
$\bm{C}(x_1,\ldots,x_7)^G$ is also purely transcendental over
$\bm{C}$ if $G$ is any transitive subgroups of $S_7$ other than
$A_7$; a similar result is valid for solvable transitive subgroups
of $S_{11}$.
\end{abstract}

\newpage
%-----------------------------------S1
\section{Introduction}

Let $k$ be a field.
A finitely generated field extension $L$ of $k$ is called $k$-rational if $L$ is purely transcendental over $k$;
it is called stably $k$-rational if $L(x_1,x_2,\ldots,x_m)$ is $k$-rational where $x_1,\ldots,x_m$ are elements which are algebraically independent over $L$.

Let $G$ be a subgroup of $S_n$ where $S_n$ is the symmetric group
of degree $n$. For any field $k$, $G$ acts naturally on the
rational function field $k(x_1,\ldots,x_n)$ via $k$-automorphisms
defined by $\sigma\cdot x_i=x_{\sigma(i)}$ for any $\sigma \in G$,
any $1\le i\le n$. Noether's problem asks whether the fixed field
$k(x_1,\ldots,x_n)^G:=\{f\in k(x_1,\ldots,x_n):\sigma(f)=f$ for
all $\sigma\in G\}$ is $k$-rational (resp.\ stably $k$-rational)
\cite{No}. If $G$ is embedded in $S_N$ through the left regular
representation (where $N=|G|$), it is easy to see that
$k(x_1,\ldots,x_N)^G$ is $k$-isomorphic to $k(V_{\fn{reg}})^G$
where $\rho:G\to GL(V_{\fn{reg}})$ is the regular representation
of $G$, i.e.\ $V_{\fn{reg}}=\bigoplus_{g\in G} k\cdot e_g$ is a
$k$-vector space and $h\cdot e_g=e_{hg}$ for any $h, g\in G$. We
will write $k(G)=k(V_{\fn{reg}})^G$ in the sequel. The rationality
problem of $k(G)$ is also called Noether's problem, e.g.\ in the
paper of Lenstra \cite{Le}.

Noether's problem is related to the inverse Galois problem,
to the existence of generic $G$-Galois extensions,
and to the existence of versal $G$-torsors over $k$-rational field extensions.
For a survey of this problem, see \cite{GMS,Sa,Sw}.

We will recall some previous results for the rationality problem of $k(x_1,\ldots,x_n)^G$ where $G$ is a subgroup of $S_n$
and $\sigma\cdot x_i=x_{\sigma(i)}$ for any $\sigma\in G$, any $1\le i\le n$.
If $G_1$ is another subgroup of $S_n$ which is conjugate to $G$ within $S_n$,
it is easy to see that $k(x_1,\ldots,x_n)^G$ is $k$-rational if and only if so is $k(x_1,\ldots,x_n)^{G_1}$ over $k$.
Thus it suffices to consider only one group $G$ in each conjugacy class of subgroups of $S_n$.
The case when $n\le 3$ is easy;
the answer is affirmative.

When $n=4$ and $G=A_4$ the alternating group, the rationality
problem of $\bm{C}(x_1,x_2,x_3,x_4)^{A_4}$ was studied by W.\
Burnside \cite{Bu}. For an intriguing account of this situation,
see the article \cite{CHK}. When $n=4$ and $k=\bm{C}$, the problem
was solved completely by J.\ A.\ Tyrrell and C.\ M.\ Williams
\cite{TW}.

%-------------t1.1
\begin{theorem}[Tyrrell and Williams \cite{TW}] \label{t1.1}
Let $G$ be any subgroup of $S_4$.
Then $\bm{C}(x_1,x_2,x_3,x_4)^G$ is $\bm{C}$-rational.
\end{theorem}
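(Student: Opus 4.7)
The plan is to enumerate the conjugacy classes of subgroups of $S_4$ and treat each. For $G = S_4$, the fundamental theorem of symmetric polynomials gives $\bm{C}(x_1,\ldots,x_4)^{S_4} = \bm{C}(e_1,e_2,e_3,e_4)$, and for each intransitive subgroup the orbit partition of $\{1,2,3,4\}$ factors the problem into invariant fields on at most three variables, all rational by elementary arguments. It thus suffices to handle the four nontrivial transitive subgroups $V_4$, $C_4$, $D_4$, and $A_4$.

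Since $V_4 = \{e,(12)(34),(13)(24),(14)(23)\}$ is normal in $D_4, A_4,$ and $S_4$, the first step is to compute $\bm{C}(x_1,\ldots,x_4)^{V_4}$ once. Introducing the Hadamard coordinates $y_0 = x_1+x_2+x_3+x_4$, $y_1 = x_1+x_2-x_3-x_4$, $y_2 = x_1-x_2+x_3-x_4$, $y_3 = x_1-x_2-x_3+x_4$, each nontrivial element of $V_4$ fixes $y_0$ and one of $y_1,y_2,y_3$ while negating the other two. A direct check gives
\[
\bm{C}(x_1,\ldots,x_4)^{V_4} \;=\; \bm{C}(y_0,\; y_2^2,\; y_3^2,\; y_1 y_2 y_3),
\]
after using $(y_1 y_2 y_3)^2 = y_1^2 y_2^2 y_3^2$ to eliminate $y_1^2$. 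This settles $G = V_4$; for $G = D_4$, the quotient $D_4/V_4 \cong \bm{Z}/2$ acts by a transposition on $\{y_1^2, y_2^2, y_3^2\}$, and the fixed field is again rational by direct computation. The cyclic group $C_4$, which does not contain $V_4$, admits an analogous two-step analysis along the chain $C_4 \supset \langle(13)(24)\rangle$.

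The crux is $G = A_4$. The quotient $A_4/V_4 \cong \bm{Z}/3$, generated by a $3$-cycle such as $(234)$, cyclically permutes $(y_1,y_2,y_3)$ and hence $\{y_1^2, y_2^2, y_3^2\}$, while fixing $y_0$ and $s = y_1 y_2 y_3$. Taking further invariants under this $\bm{Z}/3$-action leads to the classical presentation
\[
\bm{C}(x_1,\ldots,x_4)^{A_4} \;=\; \bm{C}(e_1,e_2,e_3,e_4)(\delta),
\]
where $\delta = \prod_{i<j}(x_i - x_j)$ satisfies $\delta^2 = D(e_1,e_2,e_3,e_4)$, the discriminant of the generic quartic. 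Rationality of this degree-two extension over $\bm{C}$ is equivalent to $\bm{C}$-rationality of the four-dimensional hypersurface $w^2 = D(e_1,e_2,e_3,e_4)$, and this is the main obstacle.

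The plan to overcome it is to first apply a Tschirnhaus substitution $x_i \mapsto x_i - e_1/4$, which forces $e_1 = 0$ and decouples $e_1$ as a free variable, reducing the problem to $\bm{C}$-rationality of the three-dimensional double cover $w^2 = D(0,e_2,e_3,e_4)$. One then produces an explicit birational parameterization of this double cover by exploiting the specific factorization structure of the reduced discriminant: informally, the roots of a depressed quartic are expressible through its resolvent cubic and a single square root (Ferrari's method), and the same structural decomposition furnishes four algebraically independent $A_4$-invariants. Algebraic closedness of $\bm{C}$ is essential here, as it permits the free extraction of cube and square roots in the construction; the analogous rationality question over $\bm{Q}$ or an arbitrary ground field would demand substantially more arithmetic input.
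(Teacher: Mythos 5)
Your reduction to the transitive subgroups and your computation of $\bm{C}(x_1,\ldots,x_4)^{V_4}=\bm{C}(y_0,y_2^2,y_3^2,y_1y_2y_3)$ are fine, and the $D_4$ and $C_4$ cases are plausibly routine, but the argument has a genuine gap exactly where the theorem is hard: the case $G=A_4$. After passing to the double cover $w^2=D(0,e_2,e_3,e_4)$ you only assert that "the same structural decomposition furnishes four algebraically independent $A_4$-invariants" via Ferrari's method; no such invariants are exhibited and no birational parameterization is given. Note that producing four algebraically independent invariants is trivial (already $e_1,e_2,e_3,e_4$ qualify); what must be shown is that one can choose four invariants that \emph{generate} the fixed field $\bm{C}(e_1,\ldots,e_4)(\delta)$, equivalently that the discriminant double cover is $\bm{C}$-rational. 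That is precisely Burnside's problem, the nontrivial content of the Tyrrell--Williams theorem, and your proposal leaves it as an unproved assertion. Equivalently, after your Hadamard step the residual problem is the rationality of $\bm{C}(s,a,b)^{C_3}$ with $\sigma: a\mapsto b\mapsto s^2/(ab)\mapsto a$, $s\mapsto s$, a nonlinear (monomial) three-dimensional action which cannot be dispatched by "free extraction of cube and square roots."

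For comparison, the paper does not attack $A_4$ (or any $G$) case by case: by Theorem \ref{t2.3} one has $k(x_1,\ldots,x_4)^G=k(x_1/x_4,x_2/x_4,x_3/x_4)^G(t)$, and the induced action on $k(x_1/x_4,x_2/x_4,x_3/x_4)$ is purely monomial, so Theorem \ref{t2.6} (rationality of three-dimensional purely monomial actions) finishes all subgroups at once, over an arbitrary field $k$ (Theorem \ref{t3.3}). This also shows your closing remark is misleading: algebraic closedness of $\bm{C}$ is not essential, since the statement holds over every field. If you want to keep your explicit route, you must actually carry out the parameterization of the $A_4$ fixed field (or of the three-dimensional monomial quotient above), which is the step your sketch currently skips.
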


A result related to Theorem \ref{t1.1} was solved by Kitayama, Yamasaki \cite{KY}, Kang and Zhou \cite{KZ}.

%--------------t1.2
\begin{theorem}[\cite{KY,KZ}] \label{t1.2}
Let $G$ be a finite subgroup of $GL_4(\bm{Q})$.
Let $G$ act on $\bm{Q}(x_1,x_2,x_3,x_4)$ by $\bm{Q}$-automorphisms defined by $\sigma\cdot x_j=\sum_{1\le i\le 4} a_{ij} x_i$
where $\sigma=(a_{ij})_{1\le i,j\le 4}\in G\subset GL_4(\bm{Q})$.
Then $\bm{Q}(x_1,x_2,x_3,x_4)^G$ is $\bm{Q}$-rational if and only if $G$ is not conjugate to the image
of a faithful representation of $C_8$ or $C_3\rtimes C_8$ in $GL_4(\bm{Q})$.
\end{theorem}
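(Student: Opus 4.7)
The statement is a refinement of the Noether problem to arbitrary faithful linear actions of finite groups in dimension four, so the natural plan is to combine a classification of finite subgroups of $GL_4(\bm{Q})$ (equivalently, up to conjugacy in $GL_4(\bm{Z})$) with the lattice-theoretic machinery of flabby classes developed by Endo--Miyata, Saltman, and Voskresenskii. A finite subgroup $G\subset GL_4(\bm{Q})$ determines a $G$-lattice $M$ of rank $4$ by choosing any $G$-stable $\bm{Z}$-lattice in $\bm{Q}^4$, and the $\bm{Q}$-rationality of $\bm{Q}(x_1,\ldots,x_4)^G$ is controlled by the flabby class $[M]^{fl}$: triviality of $[M]^{fl}$ (up to permutation lattices) yields stable rationality through Voskresenskii's theorem, while a non-permutation-equivalent flabby class is a genuine obstruction, detectable by unramified cohomology.

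Step one would be to enumerate the $710$ conjugacy classes of finite subgroups of $GL_4(\bm{Z})$ using the Brown--B\"ulow--Neub\"user--Wondratschek--Zassenhaus tables (or the equivalent CARAT database). Step two is a case-by-case reduction: decompose the associated rational representation into its isotypic components, peel off trivial summands via Speiser's no-name lemma, and wherever possible construct a short exact sequence $0\to M\to P\to Q\to 0$ with $P$ a permutation lattice so that the computation descends to a lattice $Q$ of lower rank or with already-known flabby class. The remaining truly irreducible cases, grouped by Schur index and image in $GL_n(\bm{Q})$ for $n\le 4$, form a short list whose rationality can be established either by exhibiting four explicit algebraically independent invariants generating the fixed field or by direct verification that $[M]^{fl}=0$ followed by a geometric realization argument.

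The heart of the theorem is the negative direction for the two exceptional groups. The unique faithful irreducible $\bm{Q}$-representation of $C_8$ has degree $\phi(8)=4$, and its standard integral form is the $C_8$-lattice $\bm{Z}[\zeta_8]$; the group $C_3\rtimes C_8$ admits a faithful $4$-dimensional irreducible representation in which the $C_3$ quotient acts through an outer automorphism of $\bm{Z}[\zeta_8]$. To prove non-rationality I would follow the Endo--Miyata/Swan template: compute $[\bm{Z}[\zeta_8]]^{fl}$ and detect a non-trivial class by a cohomological invariant---for instance, a non-trivial element of $H^1(H,M^{fl})$ for an appropriate subgroup $H\le C_8$, equivalently a non-trivial unramified Brauer class on the associated algebraic torus. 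For $C_3\rtimes C_8$, restriction to the cyclic subgroup $C_8$ inherits the obstruction, and one checks that it survives the twist by the $C_3$ action, so even stable rationality fails.

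The step I expect to be the main obstacle is neither the classification nor the two negative cases, but the sheer volume of the positive case analysis. Even after no-name reductions and grouping by isotypic decomposition, many groups---particularly solvable groups of mixed order whose rational representation is reducible but not multiplicity-free---do not come with an obvious permutation resolution. Systematically exploiting induction from large subgroups, together with the stability of flabby equivalence under direct sum and restriction, should compress the bookkeeping, but it is at this stage that the proofs of Kitayama--Yamasaki \cite{KY} and Kang--Zhou \cite{KZ} invest the bulk of their effort.
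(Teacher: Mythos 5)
First, note that this paper does not prove Theorem 1.2 at all: it is quoted as background from Kitayama--Yamasaki \cite{KY} and Kang--Zhou \cite{KZ}, so there is no internal proof to measure your text against; and your text is in any case a programme rather than a proof --- the entire positive direction (the case-by-case analysis over the conjugacy classes, which is the bulk of the theorem) is explicitly delegated back to \cite{KY,KZ}. A small point in the same vein: for the statement as given one needs representatives of the finitely many $GL_4(\bm{Q})$-conjugacy classes (there are $227$), not the $710$ $\bm{Z}$-classes in $GL_4(\bm{Z})$; working with the latter over-refines the classification, though it does no harm.

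The substantive gaps are in how you propose to run the argument. The flabby-class machinery of Endo--Miyata, Voskresenskii and Saltman governs \emph{multiplicative} invariant fields $\bm{Q}(M)^G$ (algebraic tori, monomial actions), not the \emph{linear} quotient $\bm{Q}(x_1,\ldots,x_4)^G$ of the theorem; picking a $G$-stable lattice $M\subset\bm{Q}^4$ and computing $[M]^{fl}$ says nothing directly about the linear action, so the step ``compute $[\bm{Z}[\zeta_8]]^{fl}$, hence the linear $C_8$-quotient is irrational'' is a non sequitur as written. The bridge that actually makes the negative case work is different: by the no-name lemma all faithful linear representations of a fixed group have stably isomorphic invariant fields, so the four-dimensional $C_8$-quotient is stably isomorphic to $\bm{Q}(C_8)$, and one quotes the Endo--Miyata/Voskresenskii/Lenstra theorem (cf.\ \cite{Le}) that $\bm{Q}(C_8)$ is not stably rational; a torus (the norm-one torus of $\bm{Q}(\zeta_8)/\bm{Q}$, whose Galois group is non-cyclic) enters only after diagonalizing over $\bm{Q}(\zeta_8)$ and descending, and that translation is precisely the step your sketch omits. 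The same conflation affects $C_3\rtimes C_8$: restricting a lattice obstruction to the subgroup $C_8$ is legitimate for tori, but it does not by itself give non-rationality of the linear $G$-quotient (non-rationality does not pass between the fixed fields of a group and a subgroup in either direction); the known arguments instead use that $C_8$ is a \emph{quotient} of $C_3\rtimes C_8$ together with descent of retract rationality/generic Galois extensions, or a direct Endo--Miyata-style computation, and this is one of the genuinely new points of \cite{KZ}. Finally, even where the flabby class is trivial you only obtain \emph{stable} rationality, whereas the theorem asserts rationality; \cite{KY,KZ} get the positive cases by explicit linearization and reduction to monomial actions of dimension at most three (reductions of exactly the kind recorded in Theorems \ref{t2.2} and \ref{t2.6} of this paper), not by lattice invariants plus an unspecified ``geometric realization argument''.
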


When $k$ is any field and $\sigma=(a_{ij})_{1\le i,j\le 4} \in GL_4(k)$,
a necessary and sufficient condition for $k(x_1,x_2,x_3,x_4)^{\langle \sigma \rangle}$ to be $k$-rational was given in \cite{Ka}.

We will prove the following result.

%-------------t1.3
\begin{theorem} \label{t1.3}
Let $k$ be any field, $G$ be a subgroup of $S_n$. Let $G$ act on
the rational function field $k(x_1,\ldots,x_n)$ via
$k$-automorphisms defined by $\sigma\cdot x_i=x_{\sigma(i)}$ for
any $\sigma\in G$, any $1\le i\le n$. If $n=4$ or 5, then
$k(x_1,\ldots,x_n)^G$ is $k$-rational.
\end{theorem}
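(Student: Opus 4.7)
The plan is a case-by-case analysis over the conjugacy classes of subgroups $G \le S_n$ for $n = 4$ and $5$ (the cases $n \le 3$ being classical, as the authors note). The case $G = S_n$ is immediate from the Fundamental Theorem of Symmetric Functions, which gives $k(x_1,\ldots,x_n)^{S_n} = k(e_1,\ldots,e_n)$. For an intransitive $G \le S_n$ with an orbit $O \subsetneq \{1,\ldots,n\}$ of size $m$, I would exploit the kernel $H = \ker(G \to \mathrm{Sym}(O))$, which fixes each $x_i$ ($i\in O$) pointwise while permuting $\{x_i : i \notin O\}$. Passing first to $k(x_1,\ldots,x_n)^H$ — which by induction on $n$ is a rational extension of $k(x_i : i \in O)$ — and then taking $G/H$-invariants (the no-name lemma handling the twisted $G/H$-action on the intermediate generators) reduces the problem to the transitive subgroups of smaller $S_m$.

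For the transitive subgroups of $S_4$, namely $C_4, V_4, D_4, A_4$, I would work down the normal series $\{e\} \triangleleft V_4 \triangleleft A_4$. The key step is to show $k(x_1,\ldots,x_4)^{V_4}$ is rational, using the three $V_4$-invariants $q_j = x_a x_b + x_c x_d$ indexed by the partitions $\{a,b\}\sqcup\{c,d\}$ of $\{1,2,3,4\}$, together with $e_1$; then a Hilbert 90 / Lagrange-resolvent argument for the residual $C_3 = A_4/V_4$-action yields rational generators for $k(x_1,\ldots,x_4)^{A_4}$. The groups $C_4, D_4, V_4$ themselves are handled by similar direct invariant constructions.

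For the transitive subgroups of $S_5$, namely $C_5, D_5, F_{20}, A_5$, the solvable chain $C_5 \triangleleft D_5 \triangleleft F_{20}$ is treated by descending through its cyclic quotients with resolvents chosen to remain rational over any base field $k$ (so that no roots of unity need be assumed). The deepest case is $A_5$, which I would settle by invoking Maeda's theorem that $k(x_1,\ldots,x_5)^{A_5}$ is $k$-rational over every field $k$.

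The main obstacle is the $A_5$ case, which depends crucially on Maeda's deep result and admits no obvious elementary proof. A secondary difficulty is constructing explicit rational generators uniformly over an arbitrary field $k$ — in particular, handling cyclic invariants in small characteristic or without roots of unity in $k$ — which requires careful use of Lagrange-resolvent-type constructions rather than off-the-shelf Kummer theory.
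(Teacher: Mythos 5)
You are right to lean on Maeda's theorem for $A_5$ (the paper does exactly that), but two of your key steps have genuine gaps. First, your treatment of $V_4\subset S_4$: the four elements $e_1,q_1,q_2,q_3$ do \emph{not} generate $k(x_1,\ldots,x_4)^{V_4}$. In characteristic $\ne 2$ the affine substitution $x_i\mapsto \tfrac12 e_1-x_i$ fixes $e_1$ and each $q_j$ but does not lie in $V_4$, so $[k(x_1,\ldots,x_4):k(e_1,q_1,q_2,q_3)]\ge 8$, whereas the invariant field has index $4$; what is true is $k(x_1,\ldots,x_4)^{V_4}=k(e_1,e_2,e_3,e_4,q_1,q_2,q_3)$, whose rationality still has to be proved. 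Likewise the ``Hilbert 90 / Lagrange resolvent'' step for $C_3=A_4/V_4$ tacitly needs cube roots of unity or an unspecified descent. The paper sidesteps this entire normal-series analysis for $n=4$: by Theorem \ref{t2.3}, $k(x_1,\ldots,x_4)^G=k(x_1/x_4,x_2/x_4,x_3/x_4)^G(t)$, the induced action on the three ratios is purely monomial, and Theorem \ref{t2.6} (rationality of three-dimensional purely monomial actions) settles every subgroup of $S_4$, transitive or not, in one stroke.

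Second, for $n=5$ the solvable transitive groups $C_5\triangleleft D_5\triangleleft G_{20}$ are precisely where the work lies, and ``resolvents chosen to remain rational over any base field'' is a wish, not an argument. Even the base case $k(x_1,\ldots,x_5)^{C_5}$ with $k=\bm{Q}$ is a nontrivial instance of Noether's problem; the paper invokes Lenstra's theorem (Theorem \ref{t2.7}) for it. For $D_5$ and $G_{20}$ the paper must (i) give a separate argument in characteristic $5$ (Theorem \ref{t3.2}), (ii) pass to $k(\zeta_5)$, diagonalize $\sigma$, and make a multiplicative change of variables $u_1,\ldots,u_4$ so that the residual generator together with $\fn{Gal}(k(\zeta_5)/k)$ again acts by permuting variables, and (iii) in the hardest subcase ($D_5$ with $[k(\zeta_5):k]=4$) finish by an explicit computation: Theorem \ref{t2.8} in characteristic $2$, and in characteristic $\ne 2$ the rationality of the conic-type relation $u^2-5v^2=20$, which has an obvious $k$-point. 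Your sketch produces none of these descent arguments, and your closing paragraph acknowledges the difficulty without resolving it. What does survive comparison with the paper: the use of Maeda's theorem, and the no-name reduction of intransitive subgroups (the paper's Theorem \ref{t2.2}(1)), though even there one must handle the case where $G$ acts unfaithfully on a chosen orbit, e.g. $G=\langle(1\,2\,3)(4\,5)\rangle$, as the paper does in the proof of Theorem \ref{t3.4}.
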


In the case when $p=7$ or $11$, two related results are obtained.

%-------------t1.4
\begin{theorem} \label{t1.4}
Let $k$ be any field, $G$ be a transitive subgroup of $S_7$. Let
$G$ act on the rational function field $k(x_1,\ldots,x_7)$ via
$k$-automorphisms defined by $\sigma\cdot x_i=x_{\sigma(i)}$ for
any $\sigma\in G$, any $1\le i\le 7$. If $G$ is not isomorphic to
the group $PSL_2(\bm{F}_7)$ or the group $A_7$, then
$k(x_1,\ldots,x_7)^G$ is $k$-rational.

Moreover, when $G$ is isomorphic to $PSL_2(\bm{F}_7)$ and $k$ is a
field satisfying that $\fn{char}k =0$ and $\sqrt{-7} \in k$, then
$k(x_1,\ldots,x_7)^G$ is also $k$-rational.

\end{theorem}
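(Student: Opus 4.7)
Up to conjugacy, the transitive subgroups of $S_7$ are $C_7$, $D_7$, $F_{21}=C_7\rtimes C_3$, $F_{42}=C_7\rtimes C_6$, $PSL_2(\bm{F}_7)$, $A_7$ and $S_7$. Excluding $A_7$ and, for the first assertion, $PSL_2(\bm{F}_7)$, five cases remain. The case $G=S_7$ is immediate because the invariant field is generated by the elementary symmetric polynomials $e_1,\ldots,e_7$. The remaining work splits into a uniform argument for the four solvable transitive subgroups and a separate argument for $PSL_2(\bm{F}_7)$.

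For the solvable groups $G\in\{C_7,D_7,F_{21},F_{42}\}$, each contains the Sylow $7$-subgroup $H=\langle(1,2,\ldots,7)\rangle$ as a normal subgroup with cyclic quotient $G/H$ of order dividing $6$. The plan is to replace the permutation module by the direct sum of the trivial and the augmentation modules. When $\fn{char} k\ne 7$, set $s=x_1+\cdots+x_7$ and $y_i=x_i-s/7$; then $y_1+\cdots+y_7=0$, $s$ is $G$-invariant, and
\[
k(x_1,\ldots,x_7)^G=k(y_1,\ldots,y_6)^G(s).
\]
It therefore suffices to show the $k$-linear $G$-action on the $6$-dimensional augmentation module has $k$-rational invariant field. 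Over $k(\zeta_7)$ the module diagonalizes and the $H$-invariants become monomial in the sense of Fischer, so the $H$-invariant subfield is $k$-rational by Fischer combined with Galois descent; the cyclic quotient $G/H$ then acts by a multiplicative action to which Endo--Miyata and Hajja--Kang type rationality results apply to yield a $k$-rational fixed field. When $\fn{char} k=7$, an Artin--Schreier descent replaces the Kummer one.

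For $G\cong PSL_2(\bm{F}_7)\cong GL_3(\bm{F}_2)$ with $\fn{char} k=0$ and $\sqrt{-7}\in k$, identify the $7$ letters with the points of $\bm{P}^2(\bm{F}_2)$ via this isomorphism. Under the hypothesis $\sqrt{-7}\in k$, $G$ admits a faithful $3$-dimensional representation $\rho\colon G\to GL_3(k)$, the classical Klein representation. The plan has two stages. First, construct a $G$-equivariant birational isomorphism
\[
k(x_1,\ldots,x_7)\cong L(t_1,t_2,t_3),
\]
where $L=k(u_1,\ldots,u_4)$ is a purely transcendental $G$-trivial extension of $k$ and $G$ acts on $(t_1,t_2,t_3)$ via $\rho$. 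This no-name-lemma splitting is achieved by producing four algebraically independent $G$-invariants in $k(x_1,\ldots,x_7)$ together with a $\rho$-covariant triple of rational functions, then invoking a Hajja--Kang-type decoupling. Second, prove that $k(t_1,t_2,t_3)^G$ is $k$-rational by exploiting Klein's invariants of degrees $4$, $6$, $14$ and $21$, which satisfy a single quadratic syzygy; three algebraically independent invariants combined with a birational change of variables eliminating the resulting square root produce a purely transcendental presentation.

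The main obstacle is the $PSL_2(\bm{F}_7)$ case, specifically the explicit construction of the no-name splitting and the subsequent rationality check for $k(t_1,t_2,t_3)^G$. The hypothesis $\sqrt{-7}\in k$ is indispensable: without it the faithful $3$-dimensional representation $\rho$ is not defined over $k$ and the reduction to three linear coordinates breaks. The solvable cases, while requiring Galois-cohomological bookkeeping in both characteristic $\ne 7$ and characteristic $7$, follow established templates in the theory of multiplicative invariants of cyclic groups.
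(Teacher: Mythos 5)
Your case list and the overall architecture (diagonalize the $7$-cycle over $k(\zeta_7)$ and descend for the solvable groups; a no-name splitting down to a faithful $3$-dimensional representation for $PSL_2(\bm{F}_7)$) agree with the paper's, but at the two places where the real work happens you appeal to results that do not exist in the generality you need. For $D_7$, $G_{21}$, $G_{42}$: after taking $C_7$-invariants of the diagonalized variables, the quotient $C_d$ ($d\mid 6$) acts by a \emph{multiplicative (monomial)} action on a rank-$6$ lattice, possibly twisted by $\fn{Gal}(k(\zeta_7)/k)$. You invoke ``Endo--Miyata and Hajja--Kang type rationality results'' here, but the Hajja--Kang monomial rationality theorems stop at dimension $3$ (Theorem 2.6), and Endo--Miyata type results give at best stable or retract information, not rationality. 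The paper's actual content in Theorems 4.1--4.3 is an explicit unimodular change of the multiplicative variables ($u_1=z_3/z_4$, $u_2=z_5/z_6$, $u_3=z_1z_4z_5/z_2$, $u_4=z_1z_4z_6/z_3$, $u_5=z_1z_2z_6$, $u_6=z_2z_3/z_5$) with respect to which $\tau$ acts as a $6$-cycle, i.e.\ the lattice of $C_7$-invariant monomials is shown to be a \emph{permutation} lattice for the quotient; only then do Lenstra's theorem ($k(C_e)$ rational for $e\mid 6$, Theorem 2.7) and the descent results handling the case $\zeta_7\notin k$ (Theorems 4.4 and 4.7) finish the argument. That such a permutation basis exists is special to small primes (the paper's remark: it holds for $p\le 41$ by Breuer and fails for $p=47$ by Swan's counterexample), so no general template can replace this computation, and your plan omits exactly this step. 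Your one-line treatment of $\fn{char}k=7$ (``Artin--Schreier descent'') is likewise only a slogan; the paper needs a separate device, Theorem 3.2, built on an auxiliary faithful $2$-dimensional action and Part (2) of Theorem 2.2.

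For $G\simeq PSL_2(\bm{F}_7)$ your first stage, the equivariant splitting of $k(x_1,\ldots,x_7)$ into a $G$-trivial purely transcendental part and three coordinates carrying the Klein representation, is the same as the paper's Theorem 4.5, which obtains it from two applications of Part (2) of Theorem 2.2 using that the simple group $G$ acts faithfully on $k(z_1/z_3,z_2/z_3)$. But your second stage, the rationality of $k(t_1,t_2,t_3)^G$, is asserted rather than proved: the fact that the invariant ring is generated by Klein's invariants $f_4,f_6,f_{14},f_{21}$ with $f_{21}^2\in k[f_4,f_6,f_{14}]$ only exhibits the fixed field as a degree-$2$ extension of $k(f_4,f_6,f_{14})$, and ``a birational change of variables eliminating the resulting square root'' is precisely the nontrivial point --- one must actually parametrize the corresponding quadric over the base, and this is where $\fn{char}k=0$ and $\sqrt{-7}\in k$ enter. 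This statement is exactly Kemper's theorem, which the paper quotes as Theorem 2.9; if you do not cite it you must reprove it, and your sketch does not do so.
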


%-------------t1.5
\begin{theorem} \label{t1.5}
Let $k$ be any field, $G$ be a transitive solvable subgroup of
$S_{11}$. Let $G$ act on the rational function field
$k(x_1,\ldots,x_{11})$ via $k$-automorphisms defined by
$\sigma\cdot x_i=x_{\sigma(i)}$ for any $\sigma\in G$, any $1\le
i\le {11}$. Then $k(x_1,\ldots,x_{11})^G$ is $k$-rational.

\end{theorem}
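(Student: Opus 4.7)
The plan is to exploit the fact that, because $11$ is prime, any transitive solvable subgroup of $S_{11}$ lies in the normaliser of its unique Sylow $11$-subgroup, which is the Frobenius group $\fn{AGL}(1,11)=C_{11}\rtimes C_{10}$. After an $S_{11}$-conjugation I would identify $\{1,\ldots,11\}$ with $\bm{F}_{11}$ (sending $11\mapsto 0$) and write $G=\langle\tau\rangle\rtimes H$, where $\tau$ is translation by $1$ and $H\le\bm{F}_{11}^*$ is one of $1$, $C_2$, $C_5$, $C_{10}$, acting on $\bm{F}_{11}$ by multiplication. All four cases are treated in a single framework.

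The first step linearises the $C_{11}$-action. Set $y_i=x_i-x_{11}$ for $i=1,\ldots,10$. Then $H$ fixes $x_{11}$ and permutes the $y_i$ by $y_i\mapsto y_{hi\bmod 11}$, while $\tau$ acts on the $y_i$ by the augmentation representation (using $y_{11}=0$) and affinely on $x_{11}$ by $x_{11}\mapsto x_{11}+y_1$. In characteristic not $11$, one has $x_{11}=(s-\sum_iy_i)/11$ with $s=\sum_ix_i$ being $G$-fixed, so $k(x_1,\ldots,x_{11})=k(s)(y_1,\ldots,y_{10})$ and the problem is reduced to showing $k(y_1,\ldots,y_{10})^G$ is $k$-rational. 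In characteristic $11$, additive Hilbert 90 (applicable because $\fn{Tr}(y_1)=0$ in the cyclic extension $k(y_1,\ldots,y_{10})/k(y_1,\ldots,y_{10})^{\langle\tau\rangle}$) provides $\alpha\in k(y_1,\ldots,y_{10})$ with $\tau(\alpha)-\alpha=y_1$, so $x_{11}-\alpha$ is $\tau$-fixed and plays the role of $s$, at the cost of making the $H$-action on it affine and requiring a further change of variables.

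The second step takes $\langle\tau\rangle$-invariants. As a $C_{11}$-module the augmentation lattice $\bigoplus\bm{Z}\cdot y_i$ is isomorphic to $\bm{Z}[\zeta_{11}]$ with $\tau$ acting by multiplication by $\zeta_{11}$; since the class number of $\bm{Q}(\zeta_{11})$ is one, $\bm{Z}[\zeta_{11}]$ is a PID, the lattice is invertible, and the associated algebraic torus is $k$-rational over every field $k$ by the Endo--Miyata--Voskresenski\u{\i} criterion. Explicit generators $t_1,\ldots,t_{10}$ of $k(y_1,\ldots,y_{10})^{\langle\tau\rangle}$ are produced as Galois-invariant monomials in the Lagrange resolvents $\eta_j=\sum_i\zeta_{11}^{-ij}y_i$ (constructed over $k(\zeta_{11})$ and descended). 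Because both $H$ (via $\eta_j\mapsto\eta_{h^{-1}j}$) and $\fn{Gal}(k(\zeta_{11})/k)$ act on the $\eta_j$ by index multiplication, the induced action of $H$ on $(t_1,\ldots,t_{10})$ is monomial, twisted by the Galois descent.

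Finally, I would show $k(t_1,\ldots,t_{10})^H$ is $k$-rational for each nontrivial $H\in\{C_2,C_5,C_{10}\}$ ($H=1$ being handled by the previous step). This is a rationality problem for a cyclic monomial action, treatable by the Hajja--Kang criteria or by direct construction of a transcendence basis of $H$-invariants, and it is the main obstacle of the proof: the intertwining of the $H$-permutation of the $\eta_j$ with the Galois twist from $k(\zeta_{11})$ to $k$ produces nontrivial cocycles, and the $t_j$ must be chosen so the resulting $H$-action can be controlled. The case $H=C_5$, which admits no proper nontrivial subgroup for an inductive reduction, is the most delicate; and the characteristic $11$ case must be completed without any recourse to $\zeta_{11}$ or division by $11$.
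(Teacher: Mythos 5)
Your skeleton (reduce to subgroups of $C_{11}\rtimes C_{10}$, diagonalize the $11$-cycle by Lagrange resolvents over $k(\zeta_{11})$, then handle the multiplier group $H$ together with $\fn{Gal}(k(\zeta_{11})/k)$) is the same as the paper's, but the proposal stops at exactly the two places where the real work lies. The step you yourself call ``the main obstacle'' --- rationality of the invariants under $H$ intertwined with the Galois twist --- is the heart of the theorem, and you give no argument for it. The paper's resolution is concrete: staying over $k(\zeta_{11})$ (it never descends first), after passing to $\sigma$-invariants $z_0=y_0$, $z_1=y_1^{11}$, $z_i=y_i/y_1^i$, it performs an explicit determinant-one monomial change of variables $z\mapsto u$ (the matrix $A$, whose existence for $p\le 41$ goes back to Breuer) so that $\tau$ becomes the plain cycle $u_1\mapsto u_2\mapsto\cdots\mapsto u_{10}\mapsto u_1$; then in every case $\langle\tau^e,\lambda\rangle$ is rewritten so that a single generator cyclically permutes the $u_i$ while acting on the coefficient field with coprime order, and Theorem \ref{t4.4} (a consequence of Theorem \ref{t4.7}) together with Lenstra's Theorem \ref{t2.7} gives rationality. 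Without a substitute for this mechanism your cases $H=C_2,C_5,C_{10}$ (with $\fn{Gal}(k(\zeta_{11})/k)$ possibly as large as $C_{10}$) remain open, exactly as you acknowledge.

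The characteristic $11$ case is likewise not handled: additive Hilbert 90 only splits off one invariant variable, and the entire resolvent machinery then fails since $\zeta_{11}$ does not exist. The paper treats char $11$ by a different device (Theorem \ref{t3.2}): since $\bm{F}_{11}\subset k$, one adjoins an auxiliary faithful action with $\sigma(u)=u+1$, $\tau^e(u)=\bar a^{-e}u$ and applies the no-name argument of Theorem \ref{t2.2}(2) twice, reducing to $k(u)^G$, which is rational by L\"uroth. Finally, your justification of the $\langle\tau\rangle$-invariant step is misapplied: the action on $k(y_1,\ldots,y_{10})$ is linear, not multiplicative, so the Endo--Miyata--Voskresenski\u{\i} torus criterion does not apply as stated, and the arithmetic obstruction to descending from $k(\zeta_{11})$ to $k$ lives in modules over $\fn{Gal}(k(\zeta_{11})/k)$ (Swan-module classes in $\bm{Z}[\zeta_{10}]$-orders, \`a la Lenstra), not in the class group of $\bm{Z}[\zeta_{11}]$; compare the paper's closing remark on $p=47$, where the failure is caused by $\bm{Z}[\zeta_{23}]$. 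For $p=11$ the correct reference is simply Theorem \ref{t2.7}.
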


We will emphasize that we choose to prove $k(x_1,\ldots,x_n)^G$ is
$k$-rational for any field $k$ in Theorem \ref{t1.3}, Theorem
\ref{t1.4} and Theorem \ref{t1.5}. The corresponding results when
$k=\bm{Q}$ or the situation that $k(G)$ is $k$-rational are
special cases or consequences of the above three theorems. In the
literature, some authors dealt with only the case $\fn{char}k=0$.
We will find that the proof of Theorem \ref{t1.3} when
$\fn{char}k=5$ or 2 requires extra efforts also; see Theorem
\ref{t3.2} and the proof of Case 5 in Section 3.

We remark that, when $n\ge 6$, it is still unknown whether $k(x_1,
\cdots, x_n)^{A_n}$ is $k$-rational or not; the answer is unknown
even when $k=\bm{C}$.

One may consider monomial representations in the above Theorem
\ref{t1.3}, instead of permutation representations. We point out
that a necessary and sufficient condition for
$k(x_1,x_2,x_3,x_4)^{\langle \sigma \rangle}$ to be $k$-rational
where $\sigma : x_1 \to x_2 \to x_3 \to x_4 \to -x_1$ is given in
\cite[Theorem 1.8]{Ka}. In particular, $\bm{Q}
(x_1,x_2,x_3,x_4)^{\langle \sigma \rangle}$ is not stably
$\bm{Q}$-rational. The situation for monomial representations of
dimension $4$ follows easily from previous results of Yamasaki
\cite{Ya} and Hoshi-Kitayama-Yamasaki \cite{HKY} on the
$3$-dimensional monomial actions if char $k \neq 2$. However, the
case when char $k = 2$ requires further investigation. In order to
solve the rationality problem for monomial representations of
dimension $5$, it is conceivable that many challenging questions
will arise.

The proof of Theorem \ref{t1.3} will be given in Section 3 (see
Theorem \ref{t3.3} and Theorem \ref{t3.4}). The proof of Theorem
\ref{t1.4} and Theorem \ref{t1.5} is given in Section 4. Theorem
\ref{t4.7} is of interest itself. The rationality problem of fixed
fields by subgroups of $S_6$ will be discussed in a separate
article.

\bigskip
Standing terminology. Throughout the paper, we will denote by
$S_n, A_n, C_n, D_n$ the symmetric group of degree $n$, the
alternating group of degree $n$, the cyclic group of order $n$,
and the dihedral group of order $2n$ respectively. If $k$ is any
field, $k(x_1,\ldots,x_n)$ denotes the rational function field of
$n$ variables over $k$. When $\rho:G\to GL(V)$ is a representation
of $G$ over a field $k$, then $k(V)$ denotes the rational function
field $k(x_1,\ldots,x_n)$ with the induced action of $G$ where
$\{x_1,\ldots,x_n\}$ is a basis of the dual space $V^*$ of $V$. In
particular, when $V=V_{reg}$ is the regular representation space,
denote by $\{x(g) : g \in G \}$ a dual basis of $V_{reg}$; then
$k(V_{reg})=k(x(g) : g \in G)$ where $h \cdot x(g) =x(hg)$ for any
$h,g \in G$. We will write $k(G):=k(V_{reg})^G$.

%-------------------------S2
\section{Preliminaries}

In this section we recall some known results which will be applied to solve the rationality problem in Theorem \ref{t1.3}.

%--------------t2.2
\begin{theorem} \label{t2.2}
Let $G$ be a finite group acting on $L(x_1,\ldots,x_m)$, the
rational function field of $m$ variables over a field $L$. Assume
that (i) for any $\sigma\in G$, $\sigma(L)\subset L$, and (ii) the
restriction of the action of $G$ to $L$ is faithful.

{\rm (1) (\cite[Theorem 1]{HK3})}
Assume furthermore that, for any $\sigma\in G$,
\[
\begin{pmatrix} \sigma(x_1) \\ \sigma(x_2) \\ \vdots \\ \sigma(x_m) \end{pmatrix}
= A(\sigma) \cdot \begin{pmatrix} x_1 \\ x_2 \\ \vdots \\ x_m \end{pmatrix}+B(\sigma)
\]
where $A(\sigma)\in GL_m(L)$ and $B(\sigma)$ is an $m\times 1$
matrix over $L$. Then there exist $z_1,\ldots,z_m \in
L(x_1,\ldots,x_m)$ so that $L(x_1,\ldots,x_m)=L(z_1,\ldots,z_m)$
with $\sigma(z_i)=z_i$ for any $\sigma\in G$, any $1\le i\le m$.

In fact, there are $(a_{ij})_{1 \le i,j \le n} \in GL_n(L)$ and
$c_j \in L$ such that, for $1 \le j \le n$, $z_j = \sum_{1\le i
\le n} a_{ij} x_i +c_j$. Moreover, if $B(\sigma)=0$ for all
$\sigma \in G$, then we may choose $z_j$ simply by $z_j =
\sum_{1\le i \le n} a_{ij} x_i$.

{\rm (2) (\cite[Theorem $1'$]{HK3})}
Assume furthermore that, for any $\sigma\in G$,
\[
\begin{pmatrix} \sigma(x_1) \\ \sigma(x_2) \\ \vdots \\ \sigma(x_m) \end{pmatrix}
=A(\sigma) \begin{pmatrix} x_1 \\ x_2 \\ \vdots \\ x_m \end{pmatrix}
\]
where $A(\sigma) \in GL_m(L)$ and $G$ acts on $L(x_1/x_m,x_2/x_m,
\ldots, x_{m-1}/x_m)$ naturally. Then there exist $z_1,\ldots,z_m
\in L(x_1,\ldots,x_m)$ so that $L(x_1/x_m,
\ldots,x_{m-1}/x_m)=L(z_1/z_m,z_2/z_m$, $\ldots,z_{m-1}/z_m)$ and
$\sigma(z_i/z_m)=z_i/z_m$ for any $\sigma\in G$, any $1\le i\le
m-1$.
\end{theorem}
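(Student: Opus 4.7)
The hypotheses (i) and (ii) place us in the setting of classical Galois cohomology: by Artin's theorem, $L$ is a Galois extension of $L^G$ with Galois group $G$. Both parts are instances of Hilbert's Theorem 90 in disguise, and the plan is to invoke the relevant vanishing statements and then unwind the resulting change of variables.

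For part (1), the group law $(\sigma\tau)(x_i) = \sigma(\tau(x_i))$ forces $\sigma \mapsto A(\sigma)$ to satisfy a twisted 1-cocycle condition with values in $GL_m(L)$, and $B$ to satisfy an associated affine cocycle condition. I would linearize in two steps. First, by Speiser's theorem (the generalization of multiplicative Hilbert 90, giving $H^1(G, GL_m(L)) = 1$), the cocycle $A$ is a coboundary: there is $P \in GL_m(L)$ with $A(\sigma) = \sigma(P)\, P^{-1}$. Substituting $x = Py$ removes the linear part and reduces the action to $\sigma(y) = y + B'(\sigma)$ for some additive 1-cocycle $B' : G \to L^m$. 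Second, since $H^1(G, L) = 0$ (additive Hilbert 90, a consequence of the normal basis theorem), and hence $H^1(G, L^m) = 0$, there exists $c \in L^m$ with $B'(\sigma) = c - \sigma(c)$; then $z := y + c$ is $G$-fixed. Unwinding $y = P^{-1}x$ gives $z_j = \sum_i (P^{-1})_{ji}\, x_i + c_j$, the desired affine form in the statement. When $B \equiv 0$, the second step is vacuous and $z_j$ is purely linear.

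Part (2) can be deduced from part (1) applied with $B = 0$: the substitution $z = P^{-1}x$ already produces $G$-fixed $z_j$'s, which are in particular projectively $G$-fixed, and since $P^{-1} \in GL_m(L)$ the subfield $L(x_1/x_m,\ldots,x_{m-1}/x_m)$ coincides with $L(z_1/z_m,\ldots,z_{m-1}/z_m)$. Alternatively one can argue directly in projective terms: one seeks $P$ and a 1-cocycle $\chi : G \to L^\times$ with $A(\sigma) = \chi(\sigma)\, \sigma(P)\, P^{-1}$, so that $\sigma(z) = \chi(\sigma)\, z$ for $z = P^{-1}x$ and the ratios $z_i/z_m$ become $G$-invariant. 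The short exact sequence
\[
1 \longrightarrow L^\times \longrightarrow GL_m(L) \longrightarrow PGL_m(L) \longrightarrow 1,
\]
combined with $H^1(G, L^\times) = 1$ (Hilbert 90) and $H^1(G, GL_m(L)) = 1$ (Speiser), forces the class of $A$ in $H^1(G, PGL_m(L))$ to be trivial, producing the required $P$ and $\chi$.

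The main obstacle is purely bookkeeping: tracking the cocycle conventions (since $G$ acts simultaneously on the variables and on their coefficients in $L$, the cocycle identity is twisted rather than a straight homomorphism), and verifying that the final $z_j$'s assume the explicit linear form promised in the statement. No new ideas are required beyond Hilbert 90 together with its additive and projective variants, all of which are classical for the Galois extension $L/L^G$.
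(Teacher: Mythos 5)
The paper itself gives no proof of Theorem 2.2 (it is quoted from [HK3]), and your Galois-descent argument --- Artin's theorem to make $L/L^G$ Galois with group $G$, Speiser's lemma $H^1(G,GL_m(L))=1$ to linearize, additive Hilbert 90 $H^1(G,L)=0$ to remove the affine part, and then part (2) deduced from part (1) with $B=0$ by passing to the degree-zero subfield --- is correct and is essentially the standard proof of the cited results. One caveat: in your ``alternative'' argument for part (2), the vanishing of the class in $H^1(G,PGL_m(L))$ is not forced by the exact sequence together with $H^1(G,L^\times)=1$ and $H^1(G,GL_m(L))=1$ in general (the connecting map lands in $H^2(G,L^\times)$, i.e.\ the Brauer group, which need not vanish); it holds here only because the cocycle is already given with values in $GL_m(L)$, so Speiser's lemma alone trivializes it --- fortunately your primary argument for (2) does not depend on this remark.
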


%------------------t2.3
\begin{theorem}[{\cite[Theorem 3.1]{AHK}}] \label{t2.3}
Let $L$ be a field, $L(x)$ be the rational function field of one
variable over $L$ and $G$ be a finite group acting on $L(x)$.
Suppose that, for any $\sigma\in G$, $\sigma(L)\subset L$ and
$\sigma(x)=a_\sigma x+b_\sigma$ where $a_\sigma, b_\sigma \in L$
and $a_\sigma\ne 0$. Then $L(x)^G=L^G(f)$ for some polynomial
$f\in L[x]$. In fact, if $m=\min \{\deg g(x): g(x)\in
L[x]^G\backslash L\}$, any polynomial $f\in L[x]^G$ with $\deg
f=m$ satisfies the property $L(x)^G=L^G(f)$.
\end{theorem}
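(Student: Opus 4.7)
The plan is to prove the stronger statement $L[x]^G = L^G[f]$ at the level of polynomial rings, from which the theorem will follow by passing to fraction fields. To set up: since $\sigma(x) = a_\sigma x + b_\sigma \in L[x]$ and $\sigma(L) \subset L$, the action of $G$ restricts to $L[x]$, and because $a_\sigma \ne 0$ each $\sigma$ preserves degree on $L[x]$. The product $N(x) := \prod_{\sigma \in G} \sigma(x)$ lies in $L[x]^G$ and has degree $|G|$, so $L[x]^G \not\subset L$ and the integer $m$ is well defined with $1 \le m \le |G|$. Fix $f \in L[x]^G$ with $\deg f = m$.

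The crux is a Euclidean-division argument. Given any $g \in L[x]^G$, I would divide by $f$ in $L[x]$ to obtain $g = qf + r$ with $q, r \in L[x]$ and $\deg r < m$. Applying $\sigma \in G$ and using $\sigma(f) = f$, comparison with the original equation yields $(q - \sigma(q))f = \sigma(r) - r$. The right-hand side has degree less than $m$, while the left-hand side, if nonzero, has degree at least $m$; hence both sides vanish, and $q, r \in L[x]^G$. By the minimality of $m$, the invariant $r$ of degree $<m$ must lie in $L \cap L[x]^G = L^G$. Induction on $\deg g$ (the base case $\deg g < m$ being the same argument with $q = 0$) then shows $g \in L^G[f]$, giving $L[x]^G = L^G[f]$.

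To conclude, I would verify that $L(x)^G = \operatorname{Frac}(L[x]^G)$; combining with the previous step yields $L(x)^G = \operatorname{Frac}(L^G[f]) = L^G(f)$. For this, given $h = p/q \in L(x)^G$ with $p, q \in L[x]$, I would use the standard norm trick: multiply numerator and denominator by $\prod_{\sigma \ne e}\sigma(q)$, so the new denominator $\prod_{\sigma \in G}\sigma(q)$ is manifestly $G$-invariant, and a short computation using $\sigma(p)/\sigma(q) = p/q$ shows the new numerator $p \cdot \prod_{\sigma \ne e}\sigma(q)$ is $G$-invariant as well. The only step with real content is the division argument, where the degree preservation afforded by the affine form $\sigma(x) = a_\sigma x + b_\sigma$ is used essentially; I expect no further obstacle.
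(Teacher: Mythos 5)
Your proposal is correct: the degree-preservation from $a_\sigma\ne 0$, the Euclidean-division comparison $(q-\sigma(q))f=\sigma(r)-r$ forcing $q,r\in L[x]^G$ with $r\in L^G$ by minimality of $m$, and the norm trick giving $L(x)^G=\operatorname{Frac}(L[x]^G)$ together yield $L(x)^G=L^G(f)$ for any invariant $f$ of minimal degree, exactly as asserted. Note that the paper itself offers no proof of this statement (it is quoted from \cite[Theorem 3.1]{AHK}), so the only comparison available is with that source; your minimal-degree division argument is the standard one that the formulation of the theorem (``any $f\in L[x]^G$ with $\deg f=m$ works'') is tailored to, and it in fact establishes the slightly stronger ring-level identity $L[x]^G=L^G[f]$.
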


%----------------------t2.8
\begin{theorem}[{\cite[Lemma 2.7]{HK2}}] \label{t2.8}
Let $k$ be any field, $a,b\in k\backslash \{0\}$ and $\sigma:
k(x_1,x_2)\to k(x_1,x_2)$ be a $k$-automorphism defined by
$\sigma(x_1)=a/x_1$, $\sigma(x_2)=b/x_2$. Then
$k(x_1,x_2)^{\langle \sigma \rangle}\allowbreak =k(u,v)$ where
\[
u=\frac{x_1-\frac{a}{x_1}}{x_1x_2-\frac{ab}{x_1x_2}}, \quad
v=\frac{x_2-\frac{b}{x_2}}{x_1x_2-\frac{ab}{x_1x_2}}.
\]
\end{theorem}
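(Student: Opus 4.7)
The plan is to prove the two inclusions
$k(u,v)\subseteq k(x_1,x_2)^{\langle\sigma\rangle}$ and $[k(x_1,x_2):k(u,v)]\le 2$ separately. Since $|\langle\sigma\rangle|=2$ forces $[k(x_1,x_2):k(x_1,x_2)^{\langle\sigma\rangle}]=2$, the tower law will then collapse the chain $k(u,v)\subseteq k(x_1,x_2)^{\langle\sigma\rangle}\subsetneq k(x_1,x_2)$ to the desired equality $k(u,v)=k(x_1,x_2)^{\langle\sigma\rangle}$.

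First I would verify the inclusion: $\sigma$ sends each of $x_1-a/x_1$, $x_2-b/x_2$, and $x_1x_2-ab/(x_1x_2)$ to its negative, so any ratio of two of them is $\sigma$-fixed; in particular $u,v\in k(x_1,x_2)^{\langle\sigma\rangle}$.

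For the degree bound I would first rewrite the given expressions for $u$ and $v$ after clearing denominators to the common form $x_1^2x_2^2-ab$:
\[
u\,(x_1^2x_2^2-ab)=(x_1^2-a)x_2,\qquad v\,(x_1^2x_2^2-ab)=(x_2^2-b)x_1.
\]
Solving the second equation for $x_2^2$, substituting into the first, and cancelling a factor of $x_1^2-a$ should collapse the system to the clean identity
\[
x_2=\frac{ub}{1-vx_1}.
\]
This exhibits $x_2\in k(u,v,x_1)$, so $k(x_1,x_2)=k(u,v)(x_1)$. Substituting this expression for $x_2$ back into the $v$-equation and clearing the denominator $(1-vx_1)^2$ then produces the quadratic
\[
vx_1^2+(bu^2-av^2-1)x_1+av=0
\]
with coefficients in $k(u,v)$. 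Hence $[k(u,v)(x_1):k(u,v)]\le 2$, so $[k(x_1,x_2):k(u,v)]\le 2$, completing the argument.

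The chief obstacle is just locating the elimination that produces $x_2=ub/(1-vx_1)$; once this rational expression is in hand, the remainder is mechanical substitution. I want to emphasize that no step of the plan divides by $2$, so the argument is valid over any field $k$, in agreement with the generality of the hypothesis.
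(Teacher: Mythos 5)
Your proof is correct. There is nothing in this paper to compare it against: the statement is quoted from \cite[Lemma 2.7]{HK2} without proof, and your route (verify that $x_1-a/x_1$, $x_2-b/x_2$, $x_1x_2-ab/(x_1x_2)$ are all sent to their negatives, so $u,v$ are $\sigma$-invariant; then show $[k(x_1,x_2):k(u,v)]\le 2$ and use $[k(x_1,x_2):k(x_1,x_2)^{\langle\sigma\rangle}]=2$ from Artin's theorem, noting $\sigma$ has order exactly $2$) is the standard argument for such statements and presumably essentially that of the original source. The key identities check out: since $1-vx_1=b(x_1^2-a)/(x_1^2x_2^2-ab)$, one indeed gets $x_2=ub/(1-vx_1)$, hence $k(x_1,x_2)=k(u,v)(x_1)$; and $vx_1^2+(bu^2-av^2-1)x_1+av=0$ is a genuine identity in $k(x_1,x_2)$, with leading coefficient $v\ne 0$, so $x_1$ is of degree at most $2$ over $k(u,v)$. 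One cosmetic correction: after clearing $(1-vx_1)^2$ in the $v$-equation you actually obtain a cubic in $x_1$ that still carries a factor $1-vx_1$; cancelling that factor (legitimate, since $vx_1\ne 1$ in $k(x_1,x_2)$, as $vx_1=1$ would force $x_1^2=a$) is what produces the displayed quadratic. Similarly, the cancellations by $x_1^2-a$ and by $1-vx_1$ in deriving $x_2=ub/(1-vx_1)$ should be flagged as divisions by nonzero elements of the function field; with those remarks the argument is complete and, as you note, characteristic-free.
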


%---------------------d2.5
\begin{defn} \label{d2.5}
Let $\sigma$ be a $k$-automorphism on the rational function field
$k(x_1,\ldots,x_n)$. $\sigma$ is called a purely monomial
automorphism if $\sigma(x_j)=\prod_{1\le i\le n} x_i^{a_{ij}}$ for
$1\le j\le n$ where $(a_{ij})_{1\le i,j\le n} \in GL_n(\bm{Z})$.
The action of a finite group $G$ acting on $k(x_1,\ldots,x_n)$ is
called a purely monomial action if, for all $\sigma\in G$,
$\sigma$ acts on $k(x_1,\ldots,x_n)$ by a purely monomial
$k$-automorphism \cite{HK1}.
\end{defn}

%---------------------t2.6
\begin{theorem}[\cite{HK1,HK2,HR}] \label{t2.6}
Let $k$ be any field, $G$ be a finite group acting on the rational function field $k(x_1,x_2,x_3)$ by purely monomial $k$-automorphisms.
Then the fixed field $k(x_1,x_2,x_3)^G$ is $k$-rational.
\end{theorem}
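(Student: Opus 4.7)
The plan is to reduce to a classification of $G$ as a finite subgroup of $GL_3(\bm{Z})$ and then verify rationality case by case using the tools of \S2. Since a purely monomial automorphism $\sigma$ with $\sigma(x_j)=\prod_i x_i^{a_{ij}}$ is the identity only when $(a_{ij})=I$, the map $G\to GL_3(\bm{Z})$, $\sigma\mapsto(a_{ij}(\sigma))$, is an injective group homomorphism. Hence I may identify $G$ with a finite subgroup of $GL_3(\bm{Z})$, and since such subgroups fall into finitely many conjugacy classes (the $73$ arithmetic crystal classes in dimension $3$) it suffices to treat one representative of each class.

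For each representative, my first move is to look for a proper $G$-stable sublattice $M'\subset M:=\bm{Z}^3$, corresponding to a $G$-stable subfield $k(M')\subset k(x_1,x_2,x_3)$ generated by $G$-invariant monomials. When such an $M'$ of rank $1$ or $2$ exists, I complete a basis of $M$ and write $k(x_1,x_2,x_3)=k(M')(t_1,\ldots,t_r)$; the residual action on the $t_i$ over $k(M')$ can be brought into the affine-linear shape of Theorem~\ref{t2.2} after a multiplicative change of coordinates, reducing rationality of $k(x_1,x_2,x_3)^G$ to that of $k(M')^G$. The latter is trivial when $\operatorname{rank}M'=1$ and follows from a direct analysis of the finite subgroups of $GL_2(\bm{Z})$ (all of order at most $12$) combined with Theorems~\ref{t2.3} and~\ref{t2.8} when $\operatorname{rank}M'=2$. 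In particular, whenever $G$ contains an involution acting as $x_i\mapsto a/x_i$ on some pair of coordinates, Theorem~\ref{t2.8} provides the needed rational generators explicitly, after which Theorem~\ref{t2.3} handles any leftover cyclic quotient.

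The remaining cases are those in which $G$ acts $\bm{Q}$-irreducibly on $M\otimes\bm{Q}$; in dimension $3$ these are severely restricted, being essentially the subgroups of $S_3\ltimes(\bm{Z}/2)^3$ that contain both a $3$-cycle and a nontrivial sign involution, and each must be handled by constructing explicit invariants. The main obstacle will be doing this uniformly in all characteristics: when $\fn{char}k=2$ the naive symmetrization $x+1/x$ degenerates, so invariants patterned on Theorem~\ref{t2.8} (which are defined over any field and avoid averaging) must be used throughout, and analogous precautions are needed when $\fn{char}k$ divides $|G|$. Once three algebraically independent $G$-invariants $u,v,w$ have been exhibited in each case, a comparison of transcendence degrees together with the tower law $[k(x_1,x_2,x_3):k(x_1,x_2,x_3)^G]=|G|$ forces $k(u,v,w)=k(x_1,x_2,x_3)^G$, completing the proof.
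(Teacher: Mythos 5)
This statement is quoted in the paper from \cite{HK1,HK2,HR}; the paper gives no proof of its own, and the combined proofs in those three references occupy well over a hundred pages of case analysis. Measured against that, your outline contains several genuine gaps. The most serious is the closing step: exhibiting three algebraically independent $G$-invariants $u,v,w$ and invoking transcendence degrees together with $[k(x_1,x_2,x_3):k(x_1,x_2,x_3)^G]=|G|$ does \emph{not} force $k(u,v,w)=k(x_1,x_2,x_3)^G$. It only gives $k(u,v,w)\subseteq k(x_1,x_2,x_3)^G$ with both fields of transcendence degree $3$; to conclude equality you must also show $[k(x_1,x_2,x_3):k(u,v,w)]=|G|$, and nothing in your argument controls that degree. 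In general $k(u,v,w)$ is a proper subfield, and even when one does pin down the correct fixed field, proving \emph{it} is $k$-rational is the actual content of the theorem: the last open case, settled in \cite{HR}, is precisely one where no naive system of invariants works and the fixed field has to be analyzed as an explicit (conic-bundle type) surface over $k(t)$ before a birational parametrization can be found.

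Two further steps would fail as written. First, in the reduction via a $G$-stable sublattice $M'\subset\bm{Z}^3$, the residual action on the complementary variables over $L=k(M')$ is again \emph{monomial} with coefficients in $L$ (e.g.\ a rank-one quotient can carry $t\mapsto c/t$ with $c\in L$), not affine-linear, so Part (1) of Theorem \ref{t2.2} does not apply after a mere multiplicative change of coordinates; taming these twisted quadratic/monomial quotient actions is exactly what Theorem \ref{t2.8} and the lengthy case work of \cite{HK1,HK2} are for, and you cannot wave it through. (There is also a faithfulness issue: $G$ need not act faithfully on $k(M')$, which is a hypothesis of Theorem \ref{t2.2}.) Second, your description of the $\bm{Q}$-irreducible cases is wrong: up to conjugacy in $GL_3(\bm{Z})$ the finite subgroups are not all contained in the standard hyperoctahedral group $S_3\ltimes(\bm{Z}/2\bm{Z})^3$; the Bravais groups of the face-centered and body-centered cubic lattices and of the hexagonal lattice give non-conjugate maximal classes, and subgroups acting through those lattices are among the hard cases, so your list of "remaining cases" omits part of the problem. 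As it stands the proposal is a plausible opening strategy (and the injection $G\hookrightarrow GL_3(\bm{Z})$ and the finiteness of the $73$ classes are fine), but it does not constitute a proof.
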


%---------------------t2.4
\begin{theorem}[Maeda \cite{Ma}] \label{t2.4}
Let $k$ be any field, $A_5$ be the alternating group of degree $5$
acting on $k(x_1,\ldots,x_5)$. Let $A_5$ act on
$k(x_1,\ldots,x_5)$ via $k$-automorphisms defined by $\sigma\cdot
x_i=x_{\sigma(i)}$ for any $\sigma\in A_5$, any $1\le i\le 5$.
Then both the fixed fields
$k(x_1/x_5,x_2/x_5,x_3/x_5,x_4/x_5)^{A_5}$ and
$k(x_1,x_2,x_3,x_4,x_5)^{A_5}$ are $k$-rational.
\end{theorem}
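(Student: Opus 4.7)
My plan is to split the proof into two reductions---from the five-variable problem to a four-variable projective one, and from the projective $A_5$-fixed field down to a degree-two extension of a manifestly rational field---and then to rationalize this last extension by a careful change of coordinates. For the first reduction, set $y_i = x_i/x_5$ for $1\le i\le 4$ and $L = k(y_1, y_2, y_3, y_4)$. The $A_5$-action on $L$ is faithful (if $\sigma$ fixes every $x_i/x_5$ then $\sigma=1$), and $\sigma(x_5) = y_{\sigma(5)}\, x_5$ (with $y_5:=1$) is multiplication by an element of $L^\times$. Since the $A_5$-invariant $e_1 = x_1+\cdots+x_5 = (1+y_1+y_2+y_3+y_4)x_5$ generates $L(x_5)$ over $L$, Theorem \ref{t2.3} gives $k(x_1,\ldots,x_5)^{A_5} = L^{A_5}(e_1)$. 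Hence the second statement of Theorem \ref{t2.4} follows from the first, and it suffices to prove $L^{A_5}$ is $k$-rational.

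For the second reduction, observe that $L^{A_5}$ is the degree-zero component of $k(x_1,\ldots,x_5)^{A_5} = k(e_1,\ldots,e_5)(\Delta)$, where $\Delta = \prod_{i<j}(x_i-x_j)$. Writing $t_i = e_i/e_1^i$ for $i=2,3,4,5$ and $d = \Delta/e_1^{10}$, one has $L^{A_5} = k(t_2, t_3, t_4, t_5)(d)$ with $d^2 = P(t_2, t_3, t_4, t_5)$, where $P$ is, up to a factor, the discriminant of the generic monic quintic. The subfield $L^{S_5} = k(t_2, t_3, t_4, t_5)$ is manifestly $k$-rational.

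The main obstacle is to show that the quadratic extension $L^{A_5} = L^{S_5}(\sqrt{P})$ is itself $k$-rational, i.e., that the affine hypersurface $d^2 = P(t)$ is $k$-rational. A natural first move is to descend along the subgroup $A_4 \le A_5$ fixing the index $5$: on $L$ this $A_4$ acts by plain permutation of $y_1,\ldots,y_4$, so $L^{A_4}$ is $k$-rational by the four-variable case (Theorem \ref{t1.1}/Theorem \ref{t1.3}), and $L^{A_5}\subset L^{A_4}$ is of degree $5$. Because $A_4$ is self-normalizing in $A_5$ (by simplicity of $A_5$), however, $L^{A_4}$ is not $A_5$-stable, so no further Galois-quotient argument is available---this is the essential difficulty. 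One must instead exhibit a rational parametrization of the discriminant hypersurface directly, by an explicit substitution exploiting arithmetic features of $A_5$ that fail for $A_n$ with $n\ge 6$.
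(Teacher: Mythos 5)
Your proposal does not prove the theorem: it stops exactly where the theorem begins. The two reductions you carry out (passing from $k(x_1,\ldots,x_5)^{A_5}$ to $L^{A_5}$ with $L=k(x_1/x_5,\ldots,x_4/x_5)$ via Theorem \ref{t2.3}, and identifying $L^{A_5}$ with a quadratic extension $L^{S_5}(\sqrt{P})$ of a visibly rational field) are routine, and the first of them is in any case built into the statement itself, since the theorem asserts the rationality of both fixed fields. The entire content of Maeda's theorem is the $k$-rationality of $L^{A_5}$, i.e.\ of the degree-$60$ fixed field of the projective permutation action, and your last paragraph concedes that you have no argument for it: you note that the $A_4$-descent is a dead end and then say one ``must exhibit a rational parametrization of the discriminant hypersurface directly,'' without exhibiting one. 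In this paper the statement is quoted from Maeda \cite{Ma} precisely because that step requires a substantial, explicit construction (Maeda produces an explicit transcendence basis of the fixed field valid over every base field); no soft Galois-theoretic or general-position argument is known to replace it, and your proposal supplies neither.

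There is also a defect in the framework you set up for the reduction you do carry out. The theorem is asserted for an arbitrary field $k$, but the description $k(x_1,\ldots,x_5)^{A_5}=k(e_1,\ldots,e_5)(\Delta)$ with $\Delta=\prod_{i<j}(x_i-x_j)$ and $\Delta^2=P(e_1,\ldots,e_5)$ is false in characteristic $2$: there $\Delta$ is $S_5$-invariant, the extension $k(x)^{A_5}/k(x)^{S_5}$ is Artin--Schreier rather than Kummer, and your hypersurface $d^2=P(t)$ does not model $L^{A_5}$ at all. So even the ``second reduction'' would have to be reworked in characteristic $2$ (as the paper's own treatment of small characteristics elsewhere, e.g.\ Case 5.1 of Theorem \ref{t3.4}, illustrates). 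As it stands, the proposal is an accurate description of why the theorem is hard, not a proof of it.
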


%---------------------t2.9
\begin{theorem}[Kemper \cite{Ke}] \label{t2.9}
Let $k$ be any field satisfying that char$k =0$ and $\sqrt{-7} \in
k$, $G$ be the group $PSL_2(\bm{F}_7)$. Then there is a faithful
representation $G \to GL(V)$ such that dim$_k V =3$ and $k(V)^G$
is $k$-rational.

\end{theorem}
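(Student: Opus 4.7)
The plan is to realize the 3-dimensional representation $V$ explicitly over $k$, to split off one transcendence degree so that the rationality of $k(V)^G$ reduces to that of the projective quotient $k(\mathbb{P}(V))^G$, and then to describe the latter explicitly via ratios of Klein's fundamental invariants. First, the two conjugate 3-dimensional irreducible complex characters of $G = PSL_2(\bm{F}_7)$ take values in $\bm{Q}(\sqrt{-7})$ (the Gauss-sum entries $(-1\pm\sqrt{-7})/2$ occurring on the conjugacy classes of order $7$), so the hypotheses $\fn{char} k = 0$ and $\sqrt{-7}\in k$ force the Schur index to be $1$ and allow one of these characters to descend to a faithful $\rho:G\to GL_3(k)$; this is the $V$ I take.

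Next, I would invoke classical Klein invariant theory (equivalently, a direct Molien-series computation) to establish that
\[
k[V]^G = k[f_4,f_6,f_{14}]\oplus f_{21}\cdot k[f_4,f_6,f_{14}],
\]
where $f_4,f_6,f_{14}$ are algebraically independent of degrees $4,6,14$ (the Klein quartic $x_1^3x_2+x_2^3x_3+x_3^3x_1$, its Hessian, and the bordered Hessian, respectively) and satisfy a single syzygy $f_{21}^2 = \Phi(f_4,f_6,f_{14})$. Since $\gcd(4,21) = 1$, the element $t := f_{21}/f_4^{5}$ is a $G$-invariant of degree $1$ in the standard grading on $k(V)$. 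Therefore $k(V) = k(\mathbb{P}(V))(t)$ is purely transcendental of degree $1$, and a Galois degree-count forces $k(V)^G = k(\mathbb{P}(V))^G(t)$. The problem is thus reduced to showing that $k(\mathbb{P}(V))^G$ is $k$-rational of transcendence degree $2$.

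For the projective quotient, any element of $k(\mathbb{P}(V))^G$ can be written as $p/q$ with $p,q\in k[V]^G$ homogeneous of a common degree $n$. Because $f_4,f_6,f_{14}$ have even degrees while $f_{21}$ has odd degree $21$, the homogeneous piece $k[V]^G_n$ equals $k[f_4,f_6,f_{14}]_n$ when $n$ is even and $f_{21}\cdot k[f_4,f_6,f_{14}]_{n-21}$ when $n$ is odd; either way, after cancelling a common factor of $f_{21}$ if present, $p/q$ lies in the degree-$0$ subfield of $k(f_4,f_6,f_{14})$. That subfield is generated by any $\bm{Z}$-basis of the lattice $L = \{(a,b,c)\in\bm{Z}^3 : 4a+6b+14c = 0\}$; the pair $(3,-2,0),(1,-3,1)$ is such a basis, producing the algebraically independent generators $r_1 = f_4^3/f_6^2$ and $r_2 = f_4f_{14}/f_6^3$. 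Hence $k(\mathbb{P}(V))^G = k(r_1,r_2)$, and combining with the previous paragraph $k(V)^G = k(r_1,r_2,t)$ is $k$-rational.

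The main obstacle is the structure of $k[V]^G$ asserted in the second paragraph: the existence of invariants of the stated degrees $4,6,14,21$ and the single syzygy. This is classical Gordan--Klein invariant theory carried out over $\bm{C}$, and because $\fn{char} k \nmid |G| = 168$ the Reynolds operator descends the description to $k$ verbatim. Everything afterwards --- the Bezout trick $21 - 4\cdot 5 = 1$, the parity argument on graded pieces, and the lattice bookkeeping for degree-$0$ monomials --- is purely formal.
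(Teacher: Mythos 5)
The paper offers no proof of Theorem 2.9: it is quoted verbatim from Kemper [Ke] and used as a black box in the proof of Theorem 4.5, so there is nothing internal to compare your argument against; the comparison is with Kemper's constructive, invariant-theoretic treatment, and your reconstruction is in the same spirit and is essentially sound. The reduction $k(V)^G=k(z_1/z_3,z_2/z_3)^G(t)$ via the degree-one invariant $t=f_{21}/f_4^5$, the identification of the degree-zero invariant field with the degree-zero part of $k(f_4,f_6,f_{14})$ using the parity of the degrees $4,6,14$ versus $21$, and the lattice bookkeeping are all correct (the exponent vectors $(3,-2,0)$ and $(1,-3,1)$ do form a $\bm{Z}$-basis of $\{(a,b,c):4a+6b+14c=0\}$, since their cross product is $-(2,3,7)$, which is primitive), and the standard norm trick justifies writing a degree-zero invariant as a quotient of homogeneous invariants of equal degree.

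Two caveats deserve attention. First, the sentence ``the hypotheses force the Schur index to be $1$'' is not a deduction: $\sqrt{-7}\in k$ only guarantees that $k$ contains the character field, and a character field can be strictly smaller than every field of realization. That the $3$-dimensional character of $PSL_2(\bm{F}_7)$ has Schur index $1$ over $\bm{Q}(\sqrt{-7})$ is a true but separate classical fact, which should be justified (for instance, its restriction to a maximal subgroup isomorphic to $S_4$ is the rational $3$-dimensional representation occurring with multiplicity one, or one can cite Klein's explicit matrices over $\bm{Q}(\sqrt{-7})$). Second, the explicit forms you name (the Klein quartic, its Hessian, the bordered Hessian) are invariants of the classical $\bm{Q}(\zeta_7)$-model, not of an arbitrary $k$-form of $V$; this is harmless because your argument only uses the degrees $4,6,14,21$, the one-dimensionality of those graded pieces, and the rank-two hypersurface structure, all of which descend to $k$ since formation of invariants commutes with base field extension in characteristic zero --- but it is that base-change statement, rather than ``the Reynolds operator,'' that does the work, and the write-up should say so.
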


\medskip Recall the definition of $k(G)$ at the end of Section 1.
The following theorem is a special case of Noether's problem,
which was investigated by many people \cite{Sw}. For a proof, see
\cite[Corollary 7.3]{Le}.

%---------------------t2.7
\begin{theorem} \label{t2.7}
Let $k$ be any field. If $n\le 46$ and $8\nmid n$, then $k(C_n)$
is $k$-rational.
\end{theorem}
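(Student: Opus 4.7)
The plan is to invoke Lenstra's general framework for Noether's problem over abelian groups. For $G = C_n$, the field $k(G) = k(V_{\mathrm{reg}})^G$ is, by Galois descent combined with the no-name lemma, stably $k$-isomorphic to the function field of an algebraic $k$-torus $T$ whose character lattice $M$ is a $\mathrm{Gal}(\bar{k}/k)$-module built from the $C_n$-module $\bm{Z}[C_n]$. The $k$-rationality of $k(G)$ is then controlled by the flasque class $[M]^{fl}$, and the goal is to show that this class vanishes \emph{uniformly in $k$} whenever $n \le 46$ and $8 \nmid n$.

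First, I would decompose the regular representation using the identity $\bm{Q}[C_n] = \bigoplus_{d \mid n} \bm{Q}(\zeta_d)$, which over $k$ refines into Galois orbits of primitive $d$-th roots of unity. Each summand contributes (up to isogeny) a Weil-restriction factor $R_{k(\zeta_d)/k}\bm{G}_m$ to $T$; such factors are themselves $k$-rational, so the ``permutation'' skeleton of the character lattice contributes nothing to the obstruction.

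Second, I would analyze the correction terms coming from the isogeny in step one. The obstruction packages as a flasque lattice whose cohomology class admits a prime-by-prime description: each prime power $p^e \| n$ contributes a local piece. Using explicit cyclotomic-unit constructions together with the classical structure of $(\bm{Z}/p^e)^{\times}$, one shows that the local contribution vanishes in the field-free sense whenever $p$ is odd, or when $p = 2$ with $e \le 2$. The obstruction at $p=2$ with $e \ge 3$ is exactly what forces the hypothesis $8 \nmid n$. The bound $n \le 46$ enters because for larger $n$ extra global obstructions may appear (for instance, arising from non-triviality of class group data in the relevant cyclotomic fields), but inside the stated range a direct verification, prime power by prime power, confirms that every local piece is trivial.

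The hardest step is the second one: the passage from prime-local vanishing of flasque contributions to the \emph{global} triviality of $[M]^{fl}$, which requires a careful Mayer--Vietoris-type gluing across the divisors of $n$ and explicit generators of the invariant field built from cyclotomic units. Once that gluing is carried out, the flasque class is trivial, $T$ is $k$-rational, and hence $k(C_n)$ is $k$-rational. This is the content of \cite[Corollary 7.3]{Le}, from which Theorem \ref{t2.7} is extracted.
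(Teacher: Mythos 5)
The paper offers no internal proof of this statement: Theorem \ref{t2.7} is quoted from the literature, and the ``proof'' consists of the single citation \cite[Corollary 7.3]{Le}. So insofar as your argument terminates in an appeal to that corollary, you end up exactly where the paper does, and nothing more is required. The problem is with the sketch you interpose as if it were the content of Lenstra's proof: taken as an argument it has a genuine gap at the final step. The flasque-class formalism you invoke controls \emph{stable} (indeed retract) rationality of the torus $T$; the implication ``$[M]^{fl}$ trivial $\Rightarrow T$ is $k$-rational'' is not a theorem --- whether stably rational tori are rational is open in general. Lenstra does not argue through flasque classes at all: he works directly with the multiplicative/Galois-descent structure, characterizes rationality by the principality of certain explicit ideals in rings of cyclotomic integers, and produces generators, so he obtains genuine rationality. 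If one insists on a stable-rationality route, one must add his result that for $k(C_n)$ stable $k$-rationality is equivalent to $k$-rationality (\cite[p.~319, Remark 5.7]{Le}), which is precisely the supplement this paper itself uses in the proof of Theorem \ref{t4.4}; your write-up never supplies that bridge.

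Two further imprecisions are worth flagging. First, the role of the bound $n\le 46$ is not a vague ``class group data for larger $n$'': Lenstra's criterion asks for specific prime ideals (lying over the relevant primes) in cyclotomic rings such as $\bm{Z}[\zeta_{p-1}]$ to be principal, and the first failure is at $n=47$ --- Swan's counterexample \cite{Sw}, which the paper recalls in its final remark; for $n\le 46$ with $8\nmid n$ the verification is an explicit arithmetic check, not a local-to-global gluing. Second, the vanishing is not ``field-free'' in the sense you describe: Lenstra's criterion genuinely depends on $k$ (for instance the obstruction behind $8\mid n$ disappears when $\fn{char}k=2$, and when $\fn{char}k=p$ divides $n$ the $p$-part must be treated separately, contributing a purely transcendental piece rather than a torus factor); the hypotheses $n\le 46$, $8\nmid n$ form a sufficient condition that happens to hold simultaneously for every field, which is what Corollary 7.3 records. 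None of this affects the validity of citing \cite[Corollary 7.3]{Le}, but the heuristic proof you wrap around the citation should not be mistaken for a proof.
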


%------------------------------------S3
\section{Subgroups of $S_5$}

%------------------d3.1
\begin{defn} \label{d3.1}
Let $p$ be a prime number, $G:=(\bm{Z}/p\bm{Z})\rtimes (\bm{Z}/p\bm{Z})^\times$.
We will present $G$ as a permutation subgroup of $S_p$ as follows.
Let $\bar{a}\in \bm{Z}/p\bm{Z}$ be a primitive root modulo $p$, i.e.\ $(\bm{Z}/p\bm{Z})^\times =\langle \bar{a} \rangle$.
Define $\sigma: x_i\mapsto x_{i+1}$,
$\tau: x_i\mapsto x_{ai}$ where $0\le i\le p-1$ and the indices of $x_i$ are taken modulo $p$.
By identifying $\sigma$ and $\tau$ as elements of $\bm{Z}/p\bm{Z}$ and $(\bm{Z}/p\bm{Z})^\times$,
it is clear that $G=\langle \sigma,\tau\rangle$ with relations $\sigma^p=\tau^{p-1}=1$ and $\tau\sigma\tau^{-1}=\sigma^a$.
Thus $G\subset S_p$.

For any positive integer $d$ with $d\mid p-1$, write $p-1=de$.
Denote by $G_{pd}$ the group $G_{pd}=\langle \sigma,\tau^e \rangle \subset G$.
\end{defn}

It is known that a transitive solvable subgroup of $S_p$ is
conjugate to a subgroup of $G_{p(p-1)}$ \cite[p.117, Proposition
11.6; DM, p.91, Exercise 3.5.1]{Co}. For the classification of
transitive non-solvable subgroups of $S_p$, see \cite[p. 99]{DM}.
As a consequence of the classification of finite simple groups,
the groups $S_n, A_n$ and the Mathieu groups are the only
$4$-transitive permutation groups \cite[p. 34]{DM}.

%---------------------t3.2
\begin{theorem} \label{t3.2}
Let $k$ be a field with $\fn{char} k=p>0$, $k(x_i: 0\le i\le p-1)$ be the rational function field of $p$ variables.
Let $G_{pd}=\langle \sigma,\tau^e\rangle$ be the group in Definition \ref{d3.1} where $p-1=de$.
Let $G_{pd}$ act on $k(x_i: 0\le i\le p-1)$ via $k$-automorphisms defined by $\sigma: x_i \mapsto x_{i+1}$,
$\tau^e: x_i \mapsto x_{a^e i}$ where $\bar{a}\in \bm{Z}/p\bm{Z}$ is a primitive root modulo $p$ and the indices of $x_i$ are taken modulo $p$.
Then $k(x_i:0\le i\le p-1)^{G_{pd}}$ is $k$-rational.
\end{theorem}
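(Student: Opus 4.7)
The plan is to exploit that in characteristic $p$ the operator $T := \sigma - 1$ acting on $V := \bigoplus_{i=0}^{p-1} k \cdot x_i$ is nilpotent of order exactly $p$, because $T^p = \sigma^p - 1 = 0$. Thus $\sigma$ has a single Jordan block of length $p$ on $V$, and after passing to a Jordan basis $\{y_0, \ldots, y_{p-1}\}$ for $\sigma$, the companion generator $\tau^e$ will turn out to act triangularly in that basis with diagonal entries in $\bm{F}_p^\times \subset k$. The whole group $G_{pd}$ will then act on a triangular tower of subfields $k(y_0) \subset k(y_0,y_1) \subset \cdots$ by affine transformations in the top variable, to which Theorem \ref{t2.3} can be applied step by step.

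To set up the Jordan basis, put $y_{p-1-k} := T^k x_0$ for $0 \le k \le p-1$. These form a $k$-basis of $V$ (the chain has full length $p$ because $T^{p-1}x_0 = \pm \sum_i x_i \ne 0$), so $k(x_0, \ldots, x_{p-1}) = k(y_0, \ldots, y_{p-1})$. By construction $\sigma y_j = y_j + y_{j-1}$ with the convention $y_{-1} := 0$. Writing $b := a^e$ and using the commutation $\tau^e T = (\sigma^b - 1)\tau^e$ and $\tau^e x_0 = x_0$, together with the expansion $\sigma^b - 1 = bT + \binom{b}{2} T^2 + \cdots + T^b$ in powers of $T$, one obtains
\[
\tau^e y_j \;=\; b^{\,p-1-j}\, y_j \;+\; \sum_{i<j} c_{j,i}\, y_i, \qquad c_{j,i} \in \bm{F}_p.
\]
In particular $\tau^e y_0 = b^{p-1} y_0 = y_0$, so $y_0$ is fixed by $G_{pd}$; more generally, the ascending chain $L_j := k(y_0, y_1, \ldots, y_j)$ is stable under $G_{pd}$ for every $0 \le j \le p-1$.

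I would then induct up this tower. At each step $L_j = L_{j-1}(y_j)$, the formulas above, combined with the triangularity of $\sigma^s = (1+T)^s$, imply that every $g \in G_{pd}$ acts on $y_j$ by $g(y_j) = a_g y_j + b_g$ with $a_g \in \bm{F}_p^\times$ and $b_g \in L_{j-1}$. Hence Theorem \ref{t2.3} applies (it does not require a faithful action on the base field) and yields a polynomial $f_j \in L_{j-1}[y_j]$ with $L_j^{G_{pd}} = L_{j-1}^{G_{pd}}(f_j)$. Since $L_0^{G_{pd}} = L_0 = k(y_0)$, iterating from $j = 1$ up to $j = p-1$ gives
\[
k(x_0, \ldots, x_{p-1})^{G_{pd}} \;=\; L_{p-1}^{G_{pd}} \;=\; k(y_0,\, f_1,\, \ldots,\, f_{p-1}),
\]
which is $k$-rational of transcendence degree $p$.

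The only genuinely new input is the change of variables and the verification that $\tau^e$ is triangular in the Jordan basis with units $b^{p-1-j}$ on the diagonal; both facts rest essentially on the characteristic-$p$ identities $T^p = 0$ and $b^{p-1} = 1$, so the argument is special to the modular situation. Once the triangular form is in hand, the iterated application of Theorem \ref{t2.3} is mechanical.
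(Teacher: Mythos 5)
Your argument is correct, but it takes a genuinely different route from the paper's. You triangularize the whole permutation module in characteristic $p$: since $T=\sigma-1$ is nilpotent with $T^{p-1}x_0=\sum_i x_i\neq 0$, the elements $y_{p-1-k}=T^kx_0$ form a Jordan basis, and the commutation $\tau^e T=(\sigma^{a^e}-1)\tau^e$ together with $\tau^e x_0=x_0$ makes $\tau^e$ (hence every element of $G_{pd}$) act triangularly with diagonal entries in $\bm{F}_p^{\times}$; you then climb the $G$-stable tower $k(y_0)\subset k(y_0,y_1)\subset\cdots$, applying Theorem \ref{t2.3} at each step (which, as you correctly note, imposes no faithfulness hypothesis on the action on the base field), and the transcendence-degree count shows the resulting $p$ generators are algebraically independent. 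The paper instead uses the faithful one-dimensional affine model: $G_{pd}$ acts on $k(u)$ by $\sigma(u)=u+1$, $\tau^e(u)=\bar{a}^{-e}u$ (again because $\bm{F}_p\subset k$), and both parts of Theorem \ref{t2.2} are invoked to trade the original variables for $G$-invariant ones, reducing everything to $k(u)^{G}$, which is $k$-rational by L\"uroth's theorem. Your version avoids Theorem \ref{t2.2} and L\"uroth altogether and is more explicit (in principle it produces invariant polynomial generators), at the cost of the bookkeeping needed to verify the triangular form; the paper's version is shorter but relies on the no-name reductions and the faithfulness hypotheses they require. Both proofs hinge on the same characteristic-$p$ facts: $\bm{F}_p\subset k$ and the unipotence of $\sigma$.
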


\begin{proof}
Write $G=G_{pd}$.

By Theorem \ref{t2.3}, $k(x_i:0\le i\le p-1)^G=k(x_i/x_0:1\le i\le p-1)^G(t)$ for some element $t$ with $\lambda(t)=t$ for all $\lambda \in G$.

On the other hand, note that $\bar{a}\in \bm{Z}/p\bm{Z} \simeq \bm{F}_p \subset k$.
Consider the action of $G$ on the rational function field $k(y_1,y_2)$ defined by
\begin{align*}
\sigma &: y_1 \mapsto y_1+y_2, ~ y_2 \mapsto y_2, \\
\tau &: y_1 \mapsto \bar{a}^{-e} y_1, ~ y_2\mapsto y_2.
\end{align*}

Define $u=y_1/y_2 \in k(y_1,y_2)$. Then $\sigma(u)=u+1$,
$\tau(u)=\bar{a}^{-e} u$.

Clearly $G$ acts faithfully on $k(y_1,y_2)$ and $k(u)$.

Since $G$ acts faithfully on $k(x_i/x_0:1\le i\le p-1)$, we may
apply Part (2) of Theorem \ref{t2.2}. It follows that
$k(x_i/x_0,u:1\le i\le p-1)=k(x_i/x_0,s:1\le i\le p-1)$ for some
element $s$ with $\lambda(s)=s$ for all $\lambda \in G$.

Hence $k(x_i: 0\le i\le p-1)^G=k(x_i/x_0:1\le i\le p-1)^G (t) \simeq k(x_i/x_0:1\le i\le p-1)^G(s)=k(x_i/x_0,u:1\le i\le p-1)^G$.

Since $G$ acts faithfully on $k(u)$, apply Part (2) of Theorem
\ref{t2.2} to $k(x_i/x_0, u:1\le i\le p-1)$ with $L=k(u)$. We get
$k(x_i/x_0,u: 1\le i\le p-1)^G=k(u)^G(v_i:1\le i\le p-1)$ where
$\lambda(v_i)=v_i$ for all $1\le i\le p-1$, for all $\lambda \in
G$.

Note that $k(u)^G$ is $k$-rational by L\"uroth's Theorem.
It follows that $k(x_i:0\le i\le p-1)^G$ is $k$-rational.
\end{proof}

\begin{remark}
By applying Theorems 1.1 of \cite{KP}, it is possible to prove the
stable rationality of $k(x_i:0\le i\le p-1)^{G_{pd}}$ in the above
theorem; but it is seems difficult to prove the rationality of it,
without the device of the above theorem.

On the other hand, the stable rationality of $\bm{Q}(x_i:0\le i\le
p-1)^{G_{pd}}$ will be discussed in a separate article. When
$p=5$, see Theorem \ref{t3.4}; when $p=7$, see the next section.

\end{remark}

The following result is a generalization of Theorem \ref{t1.1}.

%----------------------t3.3
\begin{theorem} \label{t3.3}
Let $k$ be any field.
Let $G$ be a subgroup of $S_n$ acting on the rational function field $k(x_1,\ldots,x_n)$ by $\sigma\cdot x_i=x_{\sigma(i)}$ for any $\sigma\in G$,
any $1\le i\le n$.
If $n\le 4$, then $k(x_1,\ldots,x_n)^G$ is $k$-rational.
\end{theorem}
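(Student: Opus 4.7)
The plan is to reduce the rationality question to a purely monomial action in three variables and then use the no-name lemma. For $n \le 3$ the result is classical: the conjugacy classes to be checked are the trivial group, a transposition $C_2$, $C_3$, and $S_3$, and in every case the fixed field is visibly rational via elementary symmetric polynomials or L\"uroth's theorem. I will focus on $n = 4$ and let $G \le S_4$ be arbitrary.

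First I would introduce projective coordinates $y_j := x_j/x_4$ for $j = 1, 2, 3$, with the convention $y_4 := 1$, and put $L := k(y_1, y_2, y_3)$. For every $\sigma \in S_4$ one computes
$$\sigma(y_j) \;=\; \frac{x_{\sigma(j)}}{x_{\sigma(4)}} \;=\; \frac{y_{\sigma(j)}}{y_{\sigma(4)}} \qquad (j = 1, 2, 3),$$
which is a Laurent monomial in $y_1, y_2, y_3$, so $\sigma(L) \subseteq L$. The exponent matrix of $\sigma$ is just the matrix of the $S_4$-permutation action on $\bigoplus_i \bm{Z} e_i$ restricted to the sublattice $\{\sum a_i e_i : \sum a_i = 0\}$ expressed in the basis $\{e_j - e_4\}_{j=1}^{3}$, so it automatically lies in $GL_3(\bm{Z})$, and the action on $L$ is purely monomial in the sense of Definition \ref{d2.5}. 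Faithfulness of the induced $G$-action on $L$ is immediate: by unique factorization in $k[x_1, \ldots, x_4]$, no $\sigma \ne e$ can fix all three ratios $x_j/x_4$ simultaneously.

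Next I would invoke Theorem \ref{t2.6}, which gives that $L^G$ is $k$-rational for any purely monomial action in three variables over any field. To lift back to the original field, I would write $k(x_1, \ldots, x_4) = L(x_4)$ and observe that $\sigma(x_4) = x_{\sigma(4)} = y_{\sigma(4)} \cdot x_4$ is $L$-linear with zero constant term. Theorem \ref{t2.2}(1), applied with $m = 1$, then produces an invariant $z$ satisfying $L(x_4) = L(z)$; since $z$ is transcendental over $L^G$ and $L$ is linearly disjoint from $L^G(z)$ over $L^G$, a standard degree count gives $L(z)^G = L^G(z)$, and therefore $k(x_1, \ldots, x_4)^G = L^G(z)$ is $k$-rational.

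I do not anticipate a serious obstacle here: the essential input is Theorem \ref{t2.6}, and everything else is formal bookkeeping with the no-name lemma. The only point requiring care is confirming that the induced action on $L$ is genuinely purely monomial with unimodular exponent matrices, and the lattice-theoretic description above reduces that to a consequence of $\sigma$ being an invertible permutation. Because Theorems \ref{t2.6} and \ref{t2.2} hold over arbitrary fields, the argument is characteristic-free, which is exactly the strength needed relative to Theorem \ref{t1.1}.
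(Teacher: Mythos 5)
Your proof is correct and follows essentially the same route as the paper: pass to the ratios $x_j/x_4$, observe that the induced $G$-action on $k(x_1/x_4,x_2/x_4,x_3/x_4)$ is a three-dimensional purely monomial action and apply Theorem \ref{t2.6}, then recover the full fixed field by adjoining a single invariant for the remaining variable. The only cosmetic difference is that you handle $x_4$ via Theorem \ref{t2.2}(1) together with a faithfulness check on the ratio field, whereas the paper invokes Theorem \ref{t2.3} for the same one-variable step.
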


\begin{proof}
The case $n=2$ or 3 is easy.
For example, when $G=\langle (1~2~3)\rangle \subset S_3$,
the rationality of $k(x_1,x_2,x_3)^G$ can be shown by applying Theorem \ref{t2.7}.

From now on we consider the case $n=4$.
By Theorem \ref{t2.3}, $k(x_1,x_2,x_3,x_4)^G=k(x_1/x_4,x_2/x_4,x_3/x_4)^G(t)$ for some $t$ with $\sigma(t)=t$ for all $\sigma\in G$.
Since $G$ acts on $k(x_1/x_4,x_2/x_4,x_3/x_4)^G$ by purely monomial $k$-automorphisms,
it follows that $k(x_1/x_4,\allowbreak x_2/x_4,x_3/x_4)^G$ is $k$-rational by Theorem \ref{t2.6}.
\end{proof}

%----------------------t3.4
\begin{theorem} \label{t3.4}
Let $k$ be any field, $G$ be any subgroup of $S_5$.
If $G$ acts on the rational function field $k(x_1,x_2,x_3,x_4,x_5)$ by $\sigma\cdot x_i=x_{\sigma(i)}$ for any $\sigma\in G$, any $1\le i\le 5$,
then $k(x_1,x_2,x_3,x_4,x_5)^G$ is $k$-rational.
\end{theorem}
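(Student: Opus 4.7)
My plan is to split $G \subseteq S_5$ into intransitive and transitive cases, and enumerate up to conjugacy in each. No single construction dispatches every $G$; the argument is by cases, with the hard one being the transitive solvable subgroups.

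\textbf{Intransitive case.} The orbit types on $\{1, \ldots, 5\}$ are $\{1,1,1,1,1\}$, $\{1,1,1,2\}$, $\{1,1,3\}$, $\{1,2,2\}$, $\{1,4\}$, $\{2,3\}$. Fixed-point variables are $G$-invariant, and Theorem~\ref{t2.3} adjoins them cleanly at the end. For the types $\{1,4\}$, $\{1,1,3\}$, $\{1,1,1,2\}$ the nontrivial block has size $\le 4$, and Theorem~\ref{t3.3} disposes of it. For $\{1,2,2\}$ and $\{2,3\}$ I take invariants under the kernel $N$ of the projection $G \to \fn{Sym}(O)$ onto the largest orbit $O$, apply Theorem~\ref{t3.3} on that block to rationalize it, and then straighten the residual affine action of $G/N$ on the remaining variables by part~(1) of Theorem~\ref{t2.2} together with Theorem~\ref{t2.3}.

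\textbf{Transitive case.} Up to conjugacy the transitive subgroups are $C_5, D_5, F_{20}, A_5, S_5$. For $S_5$ the fixed field is the field of elementary symmetric polynomials; $A_5$ is Maeda's Theorem~\ref{t2.4}; and the natural $C_5$-action is the regular representation of $C_5$, so Theorem~\ref{t2.7} applies. It remains to handle $D_5$ and $F_{20}$. In characteristic $5$ these are $G_{5,2}$ and $G_{5,4}$, so Theorem~\ref{t3.2} applies. In characteristic $\neq 5$ I would pass to the tower
\[
k(x_1, \ldots, x_5)^G = \bigl( k(x_1, \ldots, x_5)^{C_5} \bigr)^{G/C_5}
\]
and take explicit generators of the inner field adapted to the quotient: assuming $\zeta_5 \in k$, diagonalize via $y_i = \sum_j \zeta^{-ij} x_j$ and pick $y_0$ together with $C_5$-invariant monomials such as $p = y_1 y_4$, $q = y_2 y_3$, $r = y_1 y_2^2$, $s = y_1^2 y_3$ (one checks the corresponding exponent vectors generate the index-$5$ sublattice of $\bm{Z}^4$ cut out by $a_1+2a_2+3a_3+4a_4 \equiv 0 \pmod 5$); if $\zeta_5 \notin k$ one descends from $k(\zeta_5)$. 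The quotient $C_2 = D_5/C_5$ then fixes $y_0, p, q$ and acts on $(r, s)$ by $r \mapsto pq^2/r$, $s \mapsto p^2 q/s$, so Theorem~\ref{t2.8} (over the base $k(y_0, p, q)$) finishes $D_5$. The quotient $C_4 = F_{20}/C_5$ swaps $p \leftrightarrow q$ and permutes $(r, s)$ in a $4$-cycle through the primitive root $2$ of $\bm{F}_5^\times$; iterating Theorem~\ref{t2.8} first through $C_2 = \langle \tau^2 \rangle$ and then applying Theorems~\ref{t2.2} and~\ref{t2.3} for the outer $C_2$ finishes $F_{20}$.

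\textbf{Main obstacle.} The hardest piece is $F_{20}$ in characteristic $2$. Here the $C_4$-descent cannot be carried out by clean Lagrange resolvents because $2 \mid |C_4|$, and one has to apply Theorem~\ref{t2.2}(1) with a genuinely nonzero translational part $B(\sigma)$, in the same spirit as the proof of Theorem~\ref{t3.2}. Verifying that the resulting descent lands in a $k$-rational (not just stably $k$-rational) field is the delicate step; this is exactly what the authors single out when they note that characteristics $5$ and $2$ both require extra work.
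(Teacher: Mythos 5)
Your skeleton (intransitive reduction via Theorems \ref{t2.2} and \ref{t3.3}; the transitive list $C_5$, $D_5$, $G_{20}$, $A_5$, $S_5$; characteristic $5$ via Theorem \ref{t3.2}; diagonalizing the $C_5$-action over $k(\zeta_5)$ and working with invariant monomials) matches the paper's architecture, and your generators $y_0,\, p=y_1y_4,\, q=y_2y_3,\, r=y_1y_2^2,\, s=y_1^2y_3$ are a legitimate alternative to the paper's $z_i,u_i$ (the exponent matrix indeed has determinant $\pm 5$), with Theorem \ref{t2.8} correctly finishing $D_5$ when $\zeta_5\in k$. The genuine gap is the sentence ``if $\zeta_5\notin k$ one descends from $k(\zeta_5)$'': that descent is the heart of the theorem, not a routine step, and your proposal contains no argument for it. Moreover your ``main obstacle'' is misdiagnosed. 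For $G_{20}$ the descent is actually painless in \emph{every} characteristic $\neq 5$, because the quotient $C_4$ is all of $(\bm{Z}/5\bm{Z})^\times$ and absorbs the Galois twist: with $\lambda(\zeta)=\zeta^2$ one has $\tau\lambda(y_i)=y_i$, so $\langle\tau,\lambda\rangle=\langle\tau\lambda,\tau\rangle$ and the problem untwists to the split case; no affine $B(\sigma)\neq 0$ device in the spirit of Theorem \ref{t3.2} is available or needed when $\fn{char}k\neq 5$, since there is no $\bm{F}_5$-structure in $k$. The case that genuinely resists is $D_5$ with $[k(\zeta_5):k]=4$: the residual group $\langle\tau^2,\lambda\rangle$ cannot be untwisted (only $\tau^2\lambda^2$ kills the $y$-part), and the paper must argue separately for $\fn{char}k=2$ (Theorem \ref{t2.8} followed by explicit changes of variables and Theorem \ref{t2.2}(1)) and $\fn{char}k\neq 2$ (reduction, via $k(\zeta_5+\zeta_5^{-1})=k(\sqrt5)$, to the conic $x^2-y^2+10x-10y=0$ over $k$, which has the rational point $(0,0)$ and is parametrized). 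Without an argument of this kind your proof does not cover $D_5$ over, say, $k=\bm{Q}$.

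A secondary unjustified step: for $G_{20}$ with $\zeta_5\in k$, after applying Theorem \ref{t2.8} to $\langle\tau^2\rangle$ you propose to straighten the outer $C_2$ by Theorems \ref{t2.2}(1) and \ref{t2.3}; but the induced action of $\tau$ on the Theorem \ref{t2.8} generators is not a priori affine over $k(y_0,p,q)$, so this requires an explicit computation (compare the paper's Case 5.1, where exactly this kind of verification is done by hand). The paper sidesteps it for $G_{20}$ by choosing variables $u_1,\dots,u_4$ that $\tau$ permutes cyclically, so that Theorem \ref{t2.7} (Noether's problem for $C_4$) applies at once; adopting that change of variables would simplify your transitive cases and is also what makes the Galois descent for $G_{20}$ transparent. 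Your intransitive discussion is essentially the paper's and is fine.
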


\begin{proof}
First of all note that, if $G$ is not a transitive subgroup of
$S_5$, then the question is reduced to Theorem \ref{t3.3}. For
example, suppose that there are two $G$-orbits, $\{x_1,x_2,x_3\}$
and $\{x_4,x_5\}$. Let $G_1$ be the restriction of $G$ to
$k(x_1,x_2,x_3)$ (i.e.\ $G_1$ is the image of $G$ in $\fn{Aut}_k
k(x_1,x_2,x_3)$), and $G_2$ be the restriction of $G$ to
$k(x_4,x_5)$. Then $G_2=\{1\}$ or $S_2$. If $2\mid |G_1|$, then
$G_1$ acts faithfully on $k(x_1,x_2,x_3)$ and
$k(x_1,\ldots,x_5)^G=k(x_1,x_2,x_3)^G(t_1,t_2)$ for some $t_1$,
$t_2$ with $\lambda(t_1)=t_1$, $\lambda(t_2)=t_2$ for all
$\lambda\in G$ by applying Part (1) of Theorem \ref{t2.2}. If
$2\nmid |G_1|$, i.e.\ $|G_1|=3$, then $G\simeq G_1\times G_2$.
When $|G_1|=3$ and $|G_2|=2$, then $G=\langle \sigma\rangle$ where
$\sigma=(1,2,3)(4,5)$. Hence
$k(x_1,\ldots,x_5)^G=\{k(x_1,x_2,\ldots,x_5)^{\langle
\sigma^3\rangle}\}^{\langle
\sigma\rangle}=k(x_1,x_2,x_3)(x_4+x_5,x_4x_5)^{\langle\sigma\rangle}
=k(x_1,x_2,x_3)^{\langle\sigma\rangle}(x_4+x_5,x_4x_5)$. Hence the
result.

From now on, we will assume that $G$ is a transitive subgroup of $S_5$.

As mentioned before, it suffices to show that
$k(x_1,\ldots,x_5)^G$ is $k$-rational where $G$ is a transitive
subgroup of $S_5$ in each conjugacy class of subgroups in $S_5$.

There are only 5 such conjugacy classes.
We choose a representative in each class.
We get
\[
S_5,~ A_5, ~ G_{20},~ D_5,~ C_5
\]
where $C_5$ is the cyclic group of order $5$, $G_{20}$ is a group
of order 20 and is exactly the group $G_{p(p-1)}$ in Definition
\ref{d3.1} with $p=5$. Note that the dihedral group $D_5$ is the
group $G_{5\cdot 2}$ in Definition \ref{d3.1}.

\begin{Case}{1} $G=S_5$. \end{Case}

The rationality of $k(x_1,\ldots,x_5)^{S_5}$ is easy.

\begin{Case}{2} $G=A_5$. \end{Case}

The rationality of $k(x_1,\ldots,x_5)^{A_5}$ follows from Theorem \ref{t2.4}.

\begin{Case}{3} $G=C_5$. \end{Case}

The rationality of $k(x_1,\ldots,x_5)^{C_5}$ follows from Theorem \ref{t2.7}.

\begin{Case}{4} $G=G_{20}$. \end{Case}

If $\fn{char}k=5$, the rationality of $k(x_1,\ldots,x_5)^G$ follows from Theorem \ref{t3.2}.

From now on, we may assume $\fn{char}k\ne 5$. We want to show that
$k(x_i:0\le i\le 4)^G$ is $k$-rational where $G=G_{20}$.

Recall that $G=\langle \sigma,\tau\rangle$ where $\sigma: x_i\mapsto x_{i+1}$, $\tau:x_i\mapsto x_{2i}$ for $0\le i\le 4$.

Write $\zeta=\zeta_5$ where $\zeta_5$ is a primitive 5th-root of unity.
$\pi=\fn{Gal}(k(\zeta)/k)=\langle \lambda \rangle$.
Then $\pi \simeq C_4$, $C_2$ or $\{1\}$.

\medskip
\begin{Case}{4.1} $\pi=\{1\}$, i.e.\ $\zeta\in k$. \end{Case}

For $0\le i\le 4$, define
\begin{equation}
y_i=\sum_{0\le j\le 4} \zeta^{-ij} x_j. \label{eq3.1}
\end{equation}

Then $\sigma(y_i)=\zeta^i y_i$, $\tau(y_i)=y_{3i}$ for $0\le i\le 4$.

It follows that $k(x_i:0\le i\le 4)^{\langle \sigma\rangle}=k(y_i:0\le i\le 4)^{\langle\sigma\rangle}=k(z_i:0\le i\le 4)$
where $z_0=y_0$, $z_1=y_1^5$, $z_i=y_i/y_1^i$ for $2\le i\le 4$.

Note that
\[
\tau: z_0\mapsto z_0,~ z_1\mapsto z_1^3 z_3^5, ~ z_2\mapsto
1/(z_1z_3^2),~ z_3\mapsto z_4/(z_1z_3^3),~ z_4\mapsto z_2/(z_1^2
z_3^4).
\]

Define $u_1=z_2/z_3$, $u_i=\tau^{i-1}(u_1)$ for $2\le i\le 4$.
Then we get
\[
\tau: u_1\mapsto u_2\mapsto u_3\mapsto u_4\mapsto u_1,
\]
because $u_2=z_3/z_4$, $u_3=z_1z_3z_4/z_2$, $u_4=z_1z_2z_4$.

Since $k(z_i:1\le i\le 4)=k(u_i:1\le i\le 4)$,
it follows that $k(x_i:0\le i\le 4)^G=k(z_i:0\le i\le 4)^{\langle \tau\rangle}=k(u_i:1\le i\le 4)^{\langle\tau\rangle}(z_0)$ is $k$-rational by Theorem \ref{t2.7}.

\medskip
\begin{Case}{4.2} $\pi\simeq C_4$.
We may assume that $\pi=\langle\lambda\rangle$ with $\lambda(\zeta)=\zeta^2$. \end{Case}

Extend the action of $G$ on $k(x_i:0\le i\le 4)$ to the action of $\langle G,\lambda \rangle$ on $k(\zeta)(x_i:0\le i\le 4)$
by requiring $\sigma(\zeta)=\tau(\zeta)=\zeta$ and $\lambda(x_i)=x_i$ for $0\le i\le 4$.
It follows that $k(x_i: 0\le i\le 4)^G=\{k(\zeta)(x_i:0\le i\le 4)^{\langle\lambda\rangle}\}^G=k(\zeta)(x_i:0\le i\le 4)^{\langle\sigma,\tau,\lambda\rangle}$.

Define $y_i$ by the same formula as \eqref{eq3.1}.
Then we have
\begin{align*}
\sigma &: \zeta\mapsto \zeta,~ y_i\mapsto \zeta^i y_i, \\
\tau &: \zeta\mapsto \zeta,~ y_i\mapsto y_{3i}, \\
\lambda &: \zeta \mapsto \zeta^2,~ y_i\mapsto y_{2i}.
\end{align*}

Note that $\tau \lambda(y_i)=y_i$ for $0\le i\le 4$.

Define $z_i$ ($0\le i\le 4$) by the same way as in Case 4.1.
Then we get $k(\zeta)(x_i:0\le i\le 4)^{\langle \sigma\rangle}=k(\zeta)(z_i:0\le i\le 4)$.

Since $\tau \lambda(z_i)=z_i$, we find that $k(\zeta)(z_i:0\le
i\le 4)^{\langle \tau,\lambda\rangle}=k(\zeta)(z_i:0\le i\le
4)^{\langle \tau\lambda,\tau\rangle}=k(z_i:0\le i\le 4)^{\langle
\tau\rangle}$.

Define $u_i$ ($1\le i\le 4$) by the same way as in Case 4.1.
Then $k(z_i:0\le i\le 4)^{\langle \tau\rangle}$ is $k$-rational.
Thus $k(\zeta)(x_i: 0\le i\le 4)^{\langle \sigma,\tau,\lambda\rangle}$ is $k$-rational.

\medskip
\begin{Case}{4.3} $\pi\simeq C_2$.
We find that $\pi=\langle\lambda\rangle$ with $\lambda(\zeta)=\zeta^{-1}$. \end{Case}

The proof is similar to Case 4.2 except that $\lambda(y_i)=y_{4i}$ for $0\le i\le 4$.
In this situation, $\tau^2\lambda(y_i)=y_i$ for $0\le i\le 4$.
The details are omitted.

\bigskip
\begin{Case}{5} $G=D_5$. \end{Case}

The proof is similar to Case 4.
By Theorem \ref{t3.2} again, it remains to consider the case $\fn{char}k \ne 5$.

Write $\zeta=\zeta_5$.
Recall the automorphisms $\sigma$ and $\tau$ in Case 4.
It follows that $G=\allowbreak D_5=\langle \sigma,\tau^2\rangle$.

Using the same change of variables as in Case 4, we find that
\[
\tau^2: u_1\leftrightarrow u_3, ~ u_2\leftrightarrow u_4.
\]

In case $\zeta\in k$, $k(u_i:1\le i\le 4)^{\langle
\tau\rangle}=k(u_1,u_3)^{\langle\tau\rangle}(t_1,t_2)$ with
$\tau(t_1)=t_1$, $\tau(t_2)=t_2$ by applying Part (1) of Theorem
\ref{t2.2}. Hence $k(u_i:1\le i\le 4)^{\langle\tau\rangle}$ is
$k$-rational.

The case $\pi=\fn{Gal} (k(\zeta)/k)\simeq C_2$ is similar to Case 4.2.

Finally consider the case $\pi=\fn{Gal}(k(\zeta)/k)=\langle\lambda\rangle$ where $\lambda(\zeta)=\zeta^2$.

In this case, $\tau^2\lambda^2(y_i)=y_i$ for $0\le i\le 4$.

It remains to solve the rationality of $k(\zeta)(u_i:1\le i\le 4)^{\langle \tau^2, \lambda\rangle}$ where
\begin{align*}
\tau^2 &: \zeta \mapsto \zeta,~ u_1\leftrightarrow u_3,~ u_2\leftrightarrow u_4, \\
\lambda &: \zeta \mapsto \zeta^2,~ u_1\mapsto u_4\mapsto u_3
\mapsto u_2 \mapsto u_1.
\end{align*}

\medskip
\begin{Case}{5.1} $\fn{char} k=2$. \end{Case}

Define $v_1=u_2/u_4$, $v_2=u_1/u_3$, $v_3=u_1+u_3$, $v_4=u_2+u_4$.

Then we find that $k(u_i:1\le i\le 4)=k(v_i:1\le i\le 4)$ and
\begin{align*}
\tau^2 &: \zeta\mapsto \zeta,~ v_1 \mapsto 1/v_1,~ v_2\mapsto 1/v_2,~ v_3 \mapsto v_3,~ v_4\mapsto v_4, \\
\lambda &: \zeta \mapsto \zeta^2, ~ v_1 \mapsto v_2 \mapsto 1/v_1,~ v_3 \leftrightarrow v_4.
\end{align*}

Define
\[
w_1=\frac{v_1-\frac{1}{v_1}}{v_1v_2-\frac{1}{v_1v_2}}, \quad
w_2=\frac{v_2-\frac{1}{v_2}}{v_1v_2-\frac{1}{v_1v_2}}.
\]

By Theorem \ref{t2.8}, $k(\zeta)(v_i:1\le i\le 4)^{\langle \tau^2 \rangle}=k(\zeta)(w_1,w_2,v_3,v_4)$.

The action of $\lambda$ on $w_1$, $w_2$ is given by
\[
\lambda: w_1\mapsto
\frac{v_2-\frac{1}{v_2}}{\frac{v_2}{v_1}-\frac{v_1}{v_2}},~
w_2\mapsto
\frac{\frac{1}{v_1}-v_1}{\frac{v_2}{v_1}-\frac{v_1}{v_2}},~
w_1/w_2\mapsto -w_2/w_1.
\]

Using computer computation, it is easy to verify that
\[
\frac{\frac{1}{v_1}-v_1}{\frac{v_2}{v_1}-\frac{v_1}{v_2}}=\frac{w_1}{w_1^2-w_2^2}.
\]

Define $w_3=w_2/(w_1+w_2)$, $w_4=w_1+w_2$.
We find that $k(w_1,w_2)=k(w_3,w_4)$ and
\[
\lambda: \zeta\mapsto \zeta^2,~ w_3\mapsto w_3+1,~ w_4 \mapsto
1/w_4,~ v_3 \leftrightarrow v_4.
\]

Define $w_5=1/(1+w_4)$. Then $\lambda(w_5)=w_5 +1$.

Apply Part (1) of Theorem \ref{t2.2} to
$k(\zeta)(w_3,w_4,v_3,v_4)$ with $L=k(\zeta)$. We get
$k(\zeta)(w_3,w_5,v_3,v_4)^{\langle
\lambda\rangle}=k(\zeta)^{\langle
\lambda\rangle}(t_1,t_2,t_3,t_4)$ with $\lambda(t_i)=t_i$ for $1
\le i \le 4$. Since $k(\zeta)^{\langle
\lambda\rangle}(t_1,t_2,t_3,t_4)$ is $k$-rational, we are done.

\medskip
\begin{Case}{5.2} $\fn{char}k \ne 2$. \end{Case}

Define $v_1=u_2-u_4$, $v_2=u_1-u_3$, $v_3=u_1+u_3$, $v_4=u_2+u_4$.

We find that
\begin{align*}
\tau^2 &: \zeta\mapsto \zeta, ~ v_1\mapsto -v_1,~ v_2\mapsto -v_2,~ v_3\mapsto v_3,~ v_4\mapsto v_4, \\
\lambda &: \zeta \mapsto \zeta^2,~ v_1\mapsto v_2\mapsto -v_1,~ v_3\mapsto v_4\mapsto v_3.
\end{align*}

Apply Part (1) of Theorem \ref{t2.2}. We find $k(\zeta)(v_i:1\le
i\le 4)=k(\zeta)(v_1,v_2)(t_1,t_2)$ with
$\tau^2(t_1)=\lambda(t_1)=t_1$, $\tau^2(t_2)=\lambda(t_2)=t_2$.

Now we have $k(\zeta)(v_1,v_2)^{\langle \tau^2\rangle}=k(\zeta)(w_1,w_2)$ where $w_1=v_1/v_2$, $w_2=v_1v_2$.

Note that $\lambda(w_1)=-1/w_1$, $\lambda(w_2)=-w_2$. Apply Part
(1) of Theorem \ref{t2.2}. We may write
$k(\zeta)(w_1,w_2)^{\langle\lambda\rangle}=k(\zeta)(w_1)^{\langle\lambda\rangle}(t_3)$
with $\lambda(t_3)=t_3$.

Note that $k(\zeta)(w_1)^{\langle\lambda^2\rangle}=k(\zeta+\zeta^{-1})(w_1)$.
Note also that $k(\zeta+\zeta^{-1})=k(\sqrt{5})$ with $\lambda(\sqrt{5})=-\sqrt{5}$.

It follows that
$k(\zeta)(w_1)^{\langle\lambda\rangle}=k(\sqrt{5})(w_1)^{\langle\lambda\rangle}=k(u,v)$
where $u=\sqrt{5}(w_1+(1/w_1))$, $v=w_1-(1/w_1)$ with
$u^2-5v^2=20$.

Define $x$ and $y$ by $u=5+x$, $v=1+y$.
We get $k(u,v)=k(x,y)$ with a relation $x^2-y^2+10x-10y=0$.
Diving the relation by $x^2$, we get $1-(y/x)^2-10/x-10(y/x)(1/x)=0$.
Hence $1/x \in k(y/x)$.
Thus $x,y\in k(y/x)$.
We conclude that $k(u,v)=k(x,y)=k(y/x)$ is $k$-rational.
\end{proof}

%----------------------t3.5
\begin{theorem} \label{t3.5}
Let $k$ be any field, $G$ be any subgroup of $S_n$ where $n \le
5$. Then $k(G)$ is $k$-rational.
\end{theorem}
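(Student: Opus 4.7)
The plan is to reduce $k(G) = k(V_{\mathrm{reg}})^G$ to the permutation-representation fixed field $k(x_1,\ldots,x_n)^G$, which is $k$-rational by Theorem~\ref{t3.4}, via an application of Theorem~\ref{t2.2}(1).

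First suppose that $G \subset S_n$ is transitive. Let $H = \mathrm{Stab}_G(1)$ and fix coset representatives $g_1 = 1, g_2, \ldots, g_n$ for $G/H$, so that the dual basis of $V_{\mathrm{reg}}$ is indexed as $x_{i,h} := x(g_i h)$ with $1 \le i \le n$ and $h \in H$. Put
\[
y_i := \sum_{h \in H} x_{i,h}, \qquad 1 \le i \le n.
\]
Writing $s g_i = g_{\sigma(i)} h_s$ with $\sigma \in S_n$ the image of $s$ and $h_s \in H$, a direct computation gives $s \cdot y_i = y_{\sigma(i)}$. Therefore $L := k(y_1,\ldots,y_n)$ is a $G$-stable subfield on which $G$ acts as the natural permutation representation, and this action is faithful because $G \hookrightarrow S_n$. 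After using $y_i$ in place of $x_{i,1}$, the remaining $|G|-n$ variables $\{x_{i,h} : h \ne 1\}$ transform affine-linearly over $L$ (an affine term $y_{\sigma(i)} \in L$ appears precisely when $h_s h = 1$), so Theorem~\ref{t2.2}(1) produces $G$-invariants $z_1,\ldots,z_{|G|-n}$ with
\[
k(V_{\mathrm{reg}})^G = L^G(z_1,\ldots,z_{|G|-n}).
\]
Since $L^G = k(x_1,\ldots,x_n)^G$ is $k$-rational by Theorem~\ref{t3.4}, so is $k(G)$.

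For a non-transitive $G$, I exploit that $k(G)$ is intrinsic to $G$ as an abstract group. A case check of the conjugacy classes of subgroups of $S_5$ shows that every abstract isomorphism class either admits a faithful transitive embedding of degree $\le 5$ (so the transitive construction above applies), is cyclic of order $\le 6$ (so $k(G)$ is $k$-rational by Theorem~\ref{t2.7}), or is isomorphic to $C_2 \times S_3$. The last case is handled by the tensor decomposition $V_{\mathrm{reg},\,C_2 \times S_3} = V_{\mathrm{reg},\,C_2} \otimes V_{\mathrm{reg},\,S_3}$: fixing first by the $C_2$-factor replaces each column $\{x(0,\sigma), x(1,\sigma)\}$, $\sigma \in S_3$, by its two elementary symmetric invariants $y_\sigma = x(0,\sigma) + x(1,\sigma)$ and $z_\sigma = x(0,\sigma)x(1,\sigma)$; the residual $S_3$-action is regular on both $\{y_\sigma\}$ and $\{z_\sigma\}$, so applying Theorem~\ref{t2.2}(1) with $L = k(y_\sigma)$ yields $k(C_2 \times S_3) = k(S_3)(t_1,\ldots,t_6)$, which is $k$-rational because $k(S_3)$ is (by the transitive case).

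The main technical obstacle is the non-transitive case: the transitive construction does not apply to $C_2 \times S_3$ (which has no faithful transitive permutation representation of degree $\le 5$), so the additional direct-product argument is required, and one must verify that the enumeration of conjugacy classes of subgroups of $S_5$ indeed covers every situation.
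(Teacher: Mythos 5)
Your transitive construction is correct and is essentially the paper's own proof made explicit: the elements $y_i=\sum_{h\in H}x(g_ih)$ are exactly the standard embedding of the permutation module $k[G/H]$ into the regular module which the paper invokes, after which Theorem \ref{t2.2}(1) together with Theorems \ref{t3.3} and \ref{t3.4} finishes the argument (for a transitive representation of degree $n\le 4$ you should cite Theorem \ref{t3.3} rather than Theorem \ref{t3.4}). Your observation that a faithful action with two orbits cannot be embedded into the regular module (two trivial summands versus one), and your enumeration showing that the only isomorphism types of subgroups of $S_5$ with no faithful transitive representation of degree at most $5$ are $C_6$ and $C_2\times S_3$, are also correct; $C_6$ is indeed covered by Theorem \ref{t2.7}. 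This is a point the paper's own one-line proof glosses over, so isolating these residual cases is genuinely useful.

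The treatment of $C_2\times S_3$, however, contains a genuine error. The central involution $c$ swaps the six pairs $\{x(0,\sigma),x(1,\sigma)\}$ \emph{simultaneously}, and the fixed field of this single diagonal involution is not $k(y_\sigma,z_\sigma:\sigma\in S_3)$: the field generated by the pairwise elementary symmetric functions is the fixed field of the group $(C_2)^6$ of \emph{independent} swaps, so $[k(x(\epsilon,\sigma)):k(y_\sigma,z_\sigma)]=2^6$, whereas $[k(x(\epsilon,\sigma)):k(x(\epsilon,\sigma))^{\langle c\rangle}]=2$. For instance $x(0,\sigma)x(0,\rho)+x(1,\sigma)x(1,\rho)$ with $\sigma\ne\rho$ is $c$-invariant but is not fixed by the swap of the $\sigma$-pair alone, hence lies outside $k(y_\sigma,z_\sigma)$. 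Consequently $k(y_\sigma,z_\sigma)^{S_3}$ is a proper subfield of $k(C_2\times S_3)$ (of index $2^5$), and its rationality proves nothing about $k(C_2\times S_3)$; the identity $k(x_0,x_1)^{C_2}=k(x_0+x_1,x_0x_1)$ is valid for a single pair only. The case can be repaired, but not by this route: for $\fn{char}k\ne 2$ one may take the faithful $G$-stable subspace spanned by $w_j=\sum\{x(\epsilon,\sigma):\epsilon\in C_2,\ \sigma(1)=j\}$ for $j=1,2,3$ together with $w_4=\sum_{\sigma}(x(0,\sigma)-x(1,\sigma))$, note that $k(w_1,w_2,w_3,w_4)^G=k(w_1,w_2,w_3)^{S_3}(w_4^2)$ is $k$-rational, and then argue exactly as in your transitive case via Theorem \ref{t2.2}(1); when $\fn{char}k=2$ this subspace is no longer faithful and a separate argument is still required. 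As written, your proof of the theorem is incomplete at precisely this point.
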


\begin{proof}
Without loss of generality, we may assume $n \le |G|$.

Let $G$ acts on the rational function field
$k(x_1,x_2,\cdots,x_n)$ by $\sigma\cdot x_i=x_{\sigma(i)}$ for any
$\sigma\in G$, any $1\le i\le n$. Recall that $k(G)=k(x(g): g \in
G)^G$ where $h \cdot x(g)=x(hg)$ for all $h,g \in G$. We may imbed
the $G$-space $\oplus_{1 \le i \le n} k \cdot x_i$ into $\oplus_{g
\in G} k \cdot x(g)$. Apply Theorem \ref{t3.3}, Theorem
\ref{t3.4}, and Part (1) of Theorem \ref{t2.2}.

\end{proof}

%-----------------------------------S4
\section{Transitive subgroups of $S_7$ and $S_{11}$}
\setcounter{equation}{0}
%----------------------t4.1
\begin{theorem} \label{t4.7}
Let $G=G_1\times G_2$ be a finite group, $K$ be a field on which $G$
acts such that (i) $[K:k]<\infty$ where $k=K^G$, and (ii) the kernel
of the induced morphism $G\rightarrow\fn{Gal}(K/k)$ is $G_1$. Let
$K(x(g):g\in G)$ be the rational function field with $G$-actions
such that $G$ acts on $K$ as before and $h\cdot x(g)=x(hg)$ for any
$h, g\in G$. If $k(G_1)$ is $k$-rational, then $K(x(g):g\in G)^G$ is
also $k$-rational. Conversely, if $K(x(g):g\in G)^G$ is
$k$-rational, then $k(G_1)$ is stably $k$-rational.
\end{theorem}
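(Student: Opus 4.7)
The plan is to compute $K(V)^G$ in two stages --- first take $G_2$-invariants via Galois descent (using the normal basis theorem), then take $G_1$-invariants via an iterated application of the no-name lemma --- and thereby identify $K(V)^G$ as a purely transcendental extension of $k(G_1)$. Both halves of the statement will then drop out immediately.

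For the first stage, observe that $G_2 \subset G$ acts on $G$ by left multiplication with orbits $\{g_1 G_2 : g_1 \in G_1\}$, each of size $n := |G_2|$. Fix a $k$-basis $\alpha_1, \ldots, \alpha_n$ of $K$, and for each $g_1 \in G_1$ and $1 \le j \le n$ define
\[
z_{g_1,j} \;=\; \sum_{h_2 \in G_2} h_2(\alpha_j)\, x(g_1 h_2).
\]
Using the commutativity of $G_1$ and $G_2$, a direct reindexing shows $g_2'(z_{g_1,j}) = z_{g_1,j}$ for every $g_2' \in G_2$. The transition matrix from $\{x(g)\}$ to $\{z_{g_1,j}\}$ splits into $|G_1|$ blocks, each equal to the invertible Galois matrix $(h_2(\alpha_j))_{h_2,j}$, so the $z_{g_1,j}$ are a $K$-basis of $\bigoplus_{g \in G} K\cdot x(g)$ and hence a transcendence basis of $K(V)$ over $K$. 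Since $G_2$ now acts only on the scalar subfield $K$, with $K^{G_2}=k$, we obtain $K(V)^{G_2} = k(z_{g_1,j} : g_1 \in G_1,\, 1 \le j \le n)$, purely transcendental over $k$ of transcendence degree $|G|$.

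For the second stage, since $G_1$ fixes $K$ elementwise and commutes with $G_2$, a direct computation gives $h_1(z_{g_1,j}) = z_{h_1 g_1, j}$. Thus, for each fixed $j$, the tuple $(z_{g_1,j})_{g_1 \in G_1}$ realizes the regular representation of $G_1$, and we have $n$ independent such copies. By the definition of $k(G_1)$, the first block alone yields $k(z_{g_1,1} : g_1 \in G_1)^{G_1} = k(G_1)$. I would then apply Part (1) of Theorem \ref{t2.2} iteratively --- one block at a time, with coefficient field $L$ containing the previous blocks, on which $G_1$ already acts faithfully because it does so on $k(z_{g_1,1} : g_1)$ --- to obtain
\[
K(V)^G \;=\; \bigl(K(V)^{G_2}\bigr)^{G_1} \;=\; k(G_1)(s_1, \ldots, s_r), \qquad r = |G_1|(|G_2|-1).
\]
Both conclusions follow: if $k(G_1)$ is $k$-rational then so is $k(G_1)(s_1,\ldots,s_r) = K(V)^G$; conversely, if $K(V)^G = k(G_1)(s_1,\ldots,s_r)$ is $k$-rational, then $k(G_1)$ is stably $k$-rational by the very definition of stable rationality. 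The only obstacle is careful bookkeeping: verifying the algebraic independence of the $z_{g_1,j}$ over $K$ via the non-vanishing of the Galois determinant, checking that the semilinear $G_2$-action indeed descends to $K(z)^{G_2} = k(z)$, and confirming that the faithfulness hypothesis in Theorem \ref{t2.2}(1) is satisfied at each of the $n-1$ iterations.
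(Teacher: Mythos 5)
Your proposal is correct and follows essentially the same route as the paper: first descend along $G_2$ to produce a $G_2$-invariant transcendence basis (your explicit Galois-matrix construction of the $z_{g_1,j}$ is just the hands-on version of the paper's appeal to Theorem \ref{t2.2}(1), and your $z_{g_1,j}$ are exactly the paper's $u^{(\lambda)}_j$), then recognize the $G_1$-translates as $|G_2|$ copies of the regular representation of $G_1$ over $k$ and use Theorem \ref{t2.2}(1) to reduce to a single copy, giving $K(x(g):g\in G)^G=k(G_1)(s_1,\ldots,s_r)$. The only cosmetic difference is that you iterate the flattening block-by-block where the paper applies Theorem \ref{t2.2}(1) once with $L$ the first block; both conclusions then follow exactly as you state.
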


\begin{proof}
By assumptions, $G_2$ acts on $K$ faithfully. Apply Part (1) of
Theorem \ref{t2.2} to the subfield $K(x(h):h\in G_2)$. There is a
matrix $T\in GL_m(K)$ where $m=|G_1|$ and define $u_1,\ldots,u_m$ by
$$
\begin{pmatrix} u_1 \\ u_2 \\ \vdots \\ u_m \end{pmatrix}
= T \cdot \begin{pmatrix} x(h_1) \\ x(h_2) \\ \vdots \\ x(h_m)
\end{pmatrix}
$$
where $G_2=\{h_1,h_2,\ldots,h_m\}$. We obtain $h(u_i)=u_i$ for all
$h\in G_2$, for any $1\leq i\leq m$.

For any $\lambda\in G_1$, define $u^{(\lambda)}_i=\lambda(u_i)$ for
$1\leq i\leq m, \lambda\in G_1$. It is easy to verify that
$h(u^{(\lambda)}_i)=u^{(\lambda)}_i$ for any $h\in G_2$, any
$\lambda\in G_1$, any $1\leq i\leq m$; moreover, $\sum_{1\leq i\leq
m}K\cdot u^{(\lambda)}_i=\sum_{1\leq i\leq m}K\cdot x(\lambda h_i)$.
Hence $K(x(g):g\in G)=K(u^{(\lambda)}_i:\lambda\in G_1, 1\leq i\leq
m)$.

We find that $K(x(g):g\in G)^{G_2}=K(u^{(\lambda)}_i:\lambda\in G_1,
1\leq i\leq m)^{G_2}=k(u^{(\lambda)}_i:\lambda\in G_1, 1\leq i\leq
m)$. The action of $G_1$ is given by $\tau\cdot
u^{(\lambda)}_i=u^{(\tau\lambda)}_i$ for all $\tau,\lambda\in G_1$,
all $1\leq i\leq m$.

Apply Part (1) of Theorem \ref{t2.2} to
$k(u^{(\lambda)}_i:\lambda\in G_1, 1\leq i\leq m)$ with
$L=k(u^{(\lambda)}_1:\lambda\in G_1)$. We find
$t_1,t_2,\ldots,t_{m(n-1)}$ (where $n=|G_1|$) such that
$k(u^{(\lambda)}_i:\lambda\in G_1, 1\leq i\leq
m)=k(u^{(\lambda)}_1:\lambda\in G_1)(t_1,t_2,\ldots,t_{m(n-1)})$
satisfying that $\tau(t_j)=t_j$ for any $\tau\in G_1$, any $1\leq
j\leq m(n-1)$.

If $k(G_1)$ is $k$-rational, then $k(u^{(\lambda)}_i:\lambda\in G_1,
1\leq i\leq m)^{G_1}=k(u^{(\lambda)}_1:\lambda\in
G_1)^{G_1}(t_1,t_2,\ldots,t_{m(n-1)})=k(G_1)(t_1,t_2,\ldots,t_{m(n-1)})$
is $k$-rational.

On the other hand, if $K(x(g):g\in G)^G$ is $k$-rational, then
$k(u^{(\lambda)}_i:\lambda\in G_1, 1\leq i\leq
m)^{G_1}=k(G_1)(t_1,t_2,\ldots,t_{m(n-1)})$ is $k$-rational. Then
$k(G_1)$ is stably $k$-rational.
\end{proof}

%----------------------t4.4
\begin{theorem} \label{t4.4}
Let $n=de$ with $\gcd\{d,e\}=1$. Let $\sigma$ be an automorphism of
the rational function field $K(x_i:0\leq i\leq n-1)$ defined by
$$
\sigma: x_0\mapsto x_1\mapsto \cdots \mapsto x_{n-1}\mapsto x_0
$$
such that $\sigma(K)=K$ and $[K:k]=d$ where $k=K^{\langle
\sigma\rangle}$. Then $K(x_i:0\leq i\leq n-1)^{\langle
\sigma\rangle}$ is $k$-rational if and only if $k(C_e)$ is
$k$-rational.
\end{theorem}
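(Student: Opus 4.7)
The approach is to realize Theorem \ref{t4.4} as a direct application of Theorem \ref{t4.7}, once the cyclic group $\langle\sigma\rangle$ is decomposed into its two coprime-order parts and its action on the $n$ variables is recognized as the regular representation.

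First I would decompose $G:=\langle\sigma\rangle\cong C_n$. Because $n=de$ with $\gcd(d,e)=1$, the subgroups $\langle\sigma^e\rangle$ and $\langle\sigma^d\rangle$ have orders $d$ and $e$ respectively, intersect trivially (their intersection has order dividing $\gcd(d,e)=1$), and together generate $G$; hence $G=\langle\sigma^e\rangle\times\langle\sigma^d\rangle\cong C_d\times C_e$. The hypothesis $[K:k]=d$ forces the kernel of the surjection $G\to\operatorname{Gal}(K/k)$ to have order $n/d=e$, and as the unique subgroup of $G$ of order $e$ this kernel must equal $\langle\sigma^d\rangle$. Setting $G_1:=\langle\sigma^d\rangle\cong C_e$ and $G_2:=\langle\sigma^e\rangle\cong C_d$ then realizes the product decomposition required by Theorem \ref{t4.7}.

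Second I would identify the variables with the regular representation. Putting $x(\sigma^i):=x_i$ for $0\le i\le n-1$, the given rule $\sigma:x_i\mapsto x_{i+1}$ translates to $\sigma\cdot x(\sigma^i)=x(\sigma^{i+1})=x(\sigma\cdot\sigma^i)$. Because $\sigma$ generates $G$ and this rule is multiplicative, $h\cdot x(g)=x(hg)$ for all $h,g\in G$. Consequently $K(x_i:0\le i\le n-1)$ together with its $G$-action coincides with the field $K(x(g):g\in G)$ appearing in Theorem \ref{t4.7}.

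With these identifications in place, Theorem \ref{t4.7} applies and yields both implications: if $k(C_e)=k(G_1)$ is $k$-rational then $K(x_i)^{\langle\sigma\rangle}=K(x(g):g\in G)^G$ is $k$-rational, and conversely $k$-rationality of $K(x_i)^{\langle\sigma\rangle}$ forces $k(C_e)$ to be (at least) stably $k$-rational, which is the content of the ``only if'' direction here. No serious technical obstacle is anticipated; the heavy lifting is encapsulated in Theorem \ref{t4.7}, and what remains is the combinatorial bookkeeping that a cyclic permutation action of order $n=de$ on $n$ variables is precisely the regular representation of $C_d\times C_e$ once the groups are identified as above.
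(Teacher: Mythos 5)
Your reduction to Theorem \ref{t4.7} is exactly the paper's route: the published proof likewise decomposes $\langle\sigma\rangle=\langle\sigma^d\rangle\times\langle\sigma^e\rangle$, observes that $\sigma^d$ acts trivially on $K$ (so the kernel of $\langle\sigma\rangle\to\fn{Gal}(K/k)$ is $G_1=\langle\sigma^d\rangle\cong C_e$, unique of order $e$ in a cyclic group), and treats the permutation action on $x_0,\ldots,x_{n-1}$ as the regular representation $x(\sigma^i)=x_i$. Up to that point your bookkeeping is correct and matches the paper.

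The gap is in the ``only if'' direction. Theorem \ref{t4.4} asserts that $k$-rationality of $K(x_i:0\le i\le n-1)^{\langle\sigma\rangle}$ implies that $k(C_e)$ is $k$-\emph{rational}, whereas Theorem \ref{t4.7} only yields that $k(C_e)$ is \emph{stably} $k$-rational. Your closing claim that stable rationality ``is the content of the only if direction here'' misreads the statement: stable $k$-rationality of a field is not in general known to imply $k$-rationality, so as it stands you have proved a weaker assertion than the theorem. The missing ingredient, which the paper supplies explicitly, is Lenstra's result (\cite[p.~319, Remark 5.7]{Le}) that for a cyclic (indeed abelian) group the field $k(C_e)$ is $k$-rational if and only if it is stably $k$-rational; once you invoke that, the descent from stable rationality to rationality closes the argument and your proof coincides with the paper's.
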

\begin{proof}
Apply Theorem \ref{t4.7}. We find that
$\langle\sigma\rangle=\langle\sigma^d\rangle\times\langle\sigma^e\rangle$
and $\sigma^d$ acts trivially on $K$. Note that $k(C_e)$ is
$k$-rational if and only if $k(C_e)$ is stably $k$-rational by
\cite[p. 319, Remark 5.7]{Le}.
\end{proof}

%----------------------t4.1
\begin{theorem}\label{t4.1}
Let $k$ be any field, $G_{42}=\langle\sigma, \tau\rangle$ be the
group in Definition 3.1 acting on the rational function field
$k(x_i:0\leq i\leq 6)$ via $k$-automorphisms defined by
$\sigma:x_i\mapsto x_{i+1}, \tau:x_i\mapsto x_{3i}$ for $0\leq i\leq
6$. Then $k(x_i:0\leq i\leq 6)^{G_{42}}$ is $k$-rational.
\end{theorem}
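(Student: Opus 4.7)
The plan is to model the argument on Case 4 of Theorem \ref{t3.4}, with $p=7$ and $C_6$ replacing $p=5$ and $C_4$ throughout. When $\fn{char} k = 7$, the $k$-rationality follows at once from Theorem \ref{t3.2}, so assume $\fn{char} k \ne 7$. Let $\zeta=\zeta_7$ and $\pi=\fn{Gal}(k(\zeta)/k)$, a cyclic subgroup of $C_6$. I would treat first the main case $\zeta\in k$ by defining the $\sigma$-eigenvectors
\[
y_i=\sum_{0\le j\le 6}\zeta^{-ij}x_j,\q 0\le i\le 6,
\]
so $\sigma(y_i)=\zeta^i y_i$ and $\tau(y_i)=y_{5i}$ (indices mod $7$), using $5\equiv 3^{-1}\pmod 7$ and the choice of primitive root $3$ in Definition \ref{d3.1}. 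The induced permutation of $\{1,\ldots,6\}$ by $\tau$ is the single $6$-cycle $1\to 5\to 4\to 6\to 2\to 3\to 1$. Pass to $\sigma$-invariants by setting $z_0=y_0$, $z_1=y_1^7$, and $z_i=y_i/y_1^i$ for $2\le i\le 6$; then $k(y_i:0\le i\le 6)^{\langle\sigma\rangle}=k(z_i:0\le i\le 6)$.

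The heart of the argument is to exhibit $u_1,\ldots,u_6\in k(z_1,\ldots,z_6)$ with $\tau(u_j)=u_{j+1}$ (indices mod $6$) and $k(u_1,\ldots,u_6)=k(z_1,\ldots,z_6)$, in analogy with $u_1=z_2/z_3$ used for $G_{20}$. I propose
\[
u_1=\frac{z_2 z_3}{z_5}=\frac{y_2\, y_3}{y_5}, \q u_j=\tau^{j-1}(u_1)\quad\text{for}\ 2\le j\le 6,
\]
and would verify by direct computation that $k(u_1,\ldots,u_6)=k(z_1,\ldots,z_6)$. After relabelling $w_i=y_{5^i\bmod 7}$ so that $\tau$ acts on the $w_i$'s as the standard $6$-shift, one has $u_1=w_4 w_5/w_1$, and the claim reduces to checking that $v(x)=-x+x^4+x^5\in\bm{Z}[x]/(x^6-1)$ satisfies $v(5)\equiv 0\pmod 7$ (ensuring $u_1$ is $\sigma$-invariant) and $\prod_{\omega^6=1}v(\omega)=\pm 7$ (ensuring its $\tau$-orbit is a $\bm{Z}$-basis of the index-$7$ sublattice $L$ of $\sigma$-invariant Laurent monomials in $w_0,\ldots,w_5$). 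Both hold: $v(1)=v(-1)=1$, $v(\omega_3)=\omega_3^2$ so $N(v(\omega_3))=1$, and $v(\omega_6)=1-3\omega_6$ gives $N(v(\omega_6))=7$. With $k(u_1,\ldots,u_6)=k(z_1,\ldots,z_6)$ in hand, Theorem \ref{t2.7} applied to $C_6$ (noting $8\nmid 6$) yields the $k$-rationality of $k(u_1,\ldots,u_6)^{\langle\tau\rangle}$, and adjoining the $G_{42}$-invariant $z_0$ proves the theorem.

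For the remaining sub-cases $\pi\neq\{1\}$, extend scalars so that $k(x_i)^{G_{42}}=k(\zeta)(x_i)^{\langle G_{42},\pi\rangle}$ with $\pi=\langle\lambda\rangle$ fixing each $x_i$ and acting by $\lambda(\zeta)=\zeta^b$. Then $\lambda(y_i)=y_{bi}$, so there is a unique $j$ for which $\tau^j\lambda$ fixes every $y_i$. After passing to $\sigma$-invariants $k(\zeta)(z_i)$, the element $\tau^j\lambda$ still fixes every $z_i$ and restricts to $\lambda$ on $k(\zeta)$; its fixed field on $k(\zeta)(z_i)$ is therefore $k(\zeta)^{\langle\lambda\rangle}(z_i)=k(z_i)$. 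The quotient $\langle\tau,\lambda\rangle/\langle\tau^j\lambda\rangle$ is cyclic of order $6$ generated by the image of $\tau$, so the residual action on $k(z_i)$ is exactly $\langle\tau\rangle$, and the main-case analysis applies verbatim (cf.\ Cases 4.2 and 4.3 in the proof of Theorem \ref{t3.4}).

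The main obstacle is identifying the generator $u_1$ and verifying $k(u_1,\ldots,u_6)=k(z_1,\ldots,z_6)$. Without the character-lattice reformulation, an ad hoc search among ratios $z_i/z_j$ fails because the non-trivial exponents in $\tau(z_i)$ (introduced by normalizing with powers of $y_1$ when $\tau$ sends $y_1$ to $y_5$) prevent the $\tau$-orbit from closing cleanly in six steps. Recasting the question as finding $v\in\bm{Z}[x]/(x^6-1)$ with $v(5)\equiv 0\pmod 7$ and $\prod_{\omega^6=1}v(\omega)=\pm 7$ makes the search finite and transparent; the choice $v=-x+x^4+x^5$ works, and no idea beyond those already used for $G_{20}$ is required.
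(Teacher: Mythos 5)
Your proposal is correct and follows essentially the same route as the paper: after disposing of $\fn{char}k=7$ via Theorem \ref{t3.2} and passing to the eigencoordinates $y_i$ and the $\sigma$-invariants $z_0=y_0$, $z_1=y_1^7$, $z_i=y_i/y_1^i$, your $u_1=y_2y_3/y_5=z_2z_3/z_5$ generates (up to a cyclic relabeling) exactly the paper's $\tau$-orbit $u_1=z_3/z_4,\ldots,u_6=z_2z_3/z_5$, and the conclusion via Theorem \ref{t2.7} for $C_6$ together with the $\tau^j\lambda$-device for $\pi\neq\{1\}$ matches the paper's Cases 1--3. The only cosmetic difference is that you certify $k(u_1,\ldots,u_6)=k(z_1,\ldots,z_6)$ by a circulant-determinant (norm) computation in the character lattice instead of the paper's explicit unimodular change of variables, which is a perfectly valid substitute.
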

\begin{proof}
Because of Theorem \ref{t3.2}, it remains to consider the case
$\fn{char}k\neq 7$.

Write $G=G_{42}, \zeta=\zeta_7$ where $\zeta_7$ is a primitive
$7$th-root of unity. Define
$\pi=\fn{Gal}(k(\zeta)/k)=\langle\lambda\rangle$. Then $\pi\simeq
C_6,C_3,C_2$ or $\{1\}$.

\medskip
\begin{Case}{1} $\pi=\{1\}$, i.e. $\zeta\in k$. \end{Case}

For $0\leq i\leq 6$, define
\begin{equation}
y_i=\sum_{0\le j\le 6}
\zeta^{-ij} x_j. \label{eq4.1}
\end{equation}

Then $\sigma(y_i)=\zeta^i y_i$, $\tau(y_i)=y_{5i}$ for $0\le i\le
6$.

It follows that $k(x_i:0\le i\le 6)^{\langle
\sigma\rangle}=k(y_i:0\le i\le 6)^{\langle\sigma\rangle}=k(z_i:0\le
i\le 6)$ where $z_0=y_0$, $z_1=y_1^7$, $z_i=y_i/y_1^i$ for $2\le
i\le 6$.

Note that
\begin{align*}
\tau:&z_0\mapsto z_0,~ z_1\mapsto z_1^5 z_5^7, ~ z_2\mapsto
z_3/(z_1z_5^2),~ z_3\mapsto 1/(z_1^2z_5^3),~ z_4\mapsto z_6/(z_1^2
z_5^4), \\
&z_5\mapsto z_4/(z_1^3 z_5^5),z_6\mapsto z_2/(z_1^4 z_5^6).
\end{align*}

Define $u_1=z_3/z_4, u_2=z_5/z_6, u_3=z_1z_4z_5/z_2,
u_4=z_1z_4z_6/z_3, u_5=z_1z_2z_6, u_6=z_2z_3/z_5$.

It follows that $k(z_i:1\le i\le 6)=k(u_i:1\le i\le 6)$ and
$$\tau:u_1\mapsto u_2\mapsto u_3\mapsto u_4\mapsto u_5\mapsto u_6\mapsto
u_1.$$

By Theorem \ref{t2.7}, $k(u_i:1\le i\le 6)^{\langle\tau\rangle}$ is
$k$-rational. Hence the result.

\medskip
\begin{Case}{2} $\pi\simeq C_6$. We may assume that $\pi=\langle\lambda\rangle$ write
$\lambda(\zeta)=\zeta^3$.\end{Case}

Extend the action of $G$ on $k(x_i:0\le i\le 6)$ to the action of
$\langle G,\lambda\rangle$ on $k(\zeta)(x_i:0\le i\le 6)$ as in the
proof Case 4.2 of Theorem \ref{t3.4}. Define $y_i$ by the same
formula as (\ref{eq4.1}). We get
\begin{align*}
\sigma &: \zeta\mapsto \zeta,~ y_i\mapsto \zeta^i y_i, \\
\tau &: \zeta\mapsto \zeta,~ y_i\mapsto y_{5i}, \\
\lambda &: \zeta \mapsto \zeta^3,~ y_i\mapsto y_{3i}.
\end{align*}

Note that $\tau\lambda(y_i)=y_i$ for $0\le i\le6$.

The proof is almost the same as that in Case 4.2 of Theorem
\ref{t3.4}. Define $z_i, u_j$ by the same way as in Case 1. It
follows that $k(\zeta)(x_i:0\le i\le
6)^{\langle\sigma,\tau,\lambda\rangle}=k(\zeta)(z_i:0\le i\le
6)^{\langle\tau,\lambda\rangle}=k(\zeta)(u_i:1\le i\le
6)^{\langle\tau\lambda,\tau\rangle}(z_0)=k(u_i:1\le i\le
6)^{\langle\tau\rangle}(z_0)$.

Since $k(u_i:1\le i\le 6)^{\langle\tau\rangle}$ is $k$-rational by
Theorem \ref{t2.7}, it follows that $k(x_i:0\le i\le
6)^{\langle\sigma,\tau\rangle}=k(\zeta)(x_i:0\le i\le
6)^{\langle\sigma,\tau,\lambda\rangle}$ is $k$-rational.

\begin{Case}{3} $\pi\simeq C_3$ or $C_2$. \end{Case}

The proof is almost the same and is omitted.
\end{proof}
%----------------------t4.2
\begin{theorem}\label{t4.2}
Let $k$ be any field, $D_7$ be the group $G_{14}$ in Definition 3.1.
Let $D_7=\langle\sigma,\tau^3\rangle$ act on the rational function
field $k(x_i:0\le i\le 6)$ by $\sigma:x_i\mapsto x_{i+1},
\tau^3:x_i\mapsto x_{-i}$ for $0\le i\le 6$. Then $k(x_i:0\le i\le
6)^{D_7}$ is $k$-rational.
\end{theorem}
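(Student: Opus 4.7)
The plan is to follow the proof of Theorem \ref{t4.1} through the $\sigma$-diagonalization step and then exploit the fact that $\tau^3$ acts on the $u_i$'s as the simple involution $u_i\leftrightarrow u_{i+3}$ (indices mod $6$) sitting inside the $\tau$-cycle. When $\fn{char}k=7$, Theorem \ref{t3.2} applies, so I assume $\fn{char}k\ne 7$. Adjoin $\zeta=\zeta_7$, set $\pi=\fn{Gal}(k(\zeta)/k)=\langle\lambda\rangle$ (of order dividing $6$), and extend the action to $k(\zeta)(x_i)$ by $\lambda(x_i)=x_i$, so that $k(x_i)^{D_7}=k(\zeta)(x_i)^{\langle\sigma,\tau^3,\lambda\rangle}$. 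Importing the change of variables $y_i,z_i,u_i$ from Case 1 of Theorem \ref{t4.1} kills $\sigma$ and reduces the problem to computing $k(\zeta)(z_0,u_1,\ldots,u_6)^{\langle\tau^3,\lambda\rangle}$, with $z_0$ fixed by everything.

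If $\pi=\{1\}$, then $\tau^3$ is a $k$-linear permutation and Theorem \ref{t2.2}(1) (equivalently the elementary symmetric functions on the three pairs) yields $k$-rationality. If $\pi=C_2$ with $\lambda(\zeta)=\zeta^{-1}$, then $\lambda(y_i)=y_{-i}=\tau^3(y_i)$, so $\tau^3\lambda$ fixes every $u_i$ and $z_0$ while generating $\pi$ on $\zeta$; writing $\langle\tau^3,\lambda\rangle=\langle\tau^3\lambda,\tau^3\rangle$ first descends $k(\zeta)$ to $k$ and returns me to the previous case.

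If $\pi=C_3$, write $\lambda(\zeta)=\zeta^c$ with $c\in\{2,4\}$, so $\lambda(y_i)=y_{ci}$. Since $|\tau^3|$ and $|\lambda|$ are coprime and the two commute, $\langle\tau^3,\lambda\rangle=\langle\tau^3\lambda\rangle\simeq C_6$; since $\tau^3\lambda(y_i)=y_{-ci}$ with $-c\equiv 5$ or $3\pmod 7$, the generator $\tau^3\lambda$ cycles $u_1,\ldots,u_6$ as a single $6$-cycle while acting on $k(\zeta)$ with order $3$. Theorem \ref{t4.4} with $K=k(\zeta)$, $n=6$, $d=3$, $e=2$ and the trivial $k$-rationality of $k(C_2)$ finishes the case. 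If $\pi=C_6$, write $\lambda(\zeta)=\zeta^c$ with $c\in\{3,5\}$; both satisfy $c^3\equiv -1\pmod 7$, hence $\tau^3\lambda^3$ fixes every $y_i$ while acting on $\zeta$ as $\zeta\mapsto\zeta^{-1}$. First descend under $\langle\tau^3\lambda^3\rangle$ to land in $K'(z_0,u_1,\ldots,u_6)$ with $K'=k(\zeta+\zeta^{-1})$ of degree $3$ over $k$. The quotient $\langle\tau^3,\lambda\rangle/\langle\tau^3\lambda^3\rangle$ is cyclic of order $6$ (the inclusion $\langle\lambda\rangle\hookrightarrow\langle\tau^3,\lambda\rangle$ becomes an isomorphism modulo $\langle\tau^3\lambda^3\rangle$, since $\tau^3\notin\langle\lambda\rangle$), with $\bar\lambda$ inducing a generator of $\fn{Gal}(K'/k)\simeq C_3$ and cycling the $u_i$'s as a $6$-cycle. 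A second application of Theorem \ref{t4.4}, now with $K=K'$, $n=6$, $d=3$, $e=2$, concludes.

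The main obstacle is precisely the $\pi=C_6$ case: unlike Theorem \ref{t4.1}, where $\tau\lambda$ simultaneously fixed every $y_i$ and generated $\pi$ on $\zeta$, here the subgroup of $\langle\tau^3,\lambda\rangle$ that fixes the $y_i$'s is only $\langle\tau^3\lambda^3\rangle$ of order $2$, which projects onto just the unique $C_2$ subquotient of $\pi$. The workaround is the two-step descent above: first kill $\langle\tau^3\lambda^3\rangle$ to land over the real subfield $K'$, and only then invoke Theorem \ref{t4.4} on the resulting cyclic action of $\bar\lambda$.
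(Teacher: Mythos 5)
Your proposal is correct and is essentially the paper's own argument: the same change of variables $u_1,\ldots,u_6$ imported from Theorem \ref{t4.1}, the same case division on $\pi=\fn{Gal}(k(\zeta)/k)$, the same use of the composite elements $\tau^3\lambda$ (for $\pi\simeq C_2,C_3$) and $\tau^3\lambda^3$ (for $\pi\simeq C_6$, descending first to $k(\zeta+\zeta^{-1})$ before applying the cyclic-descent lemma), and the same appeal to Theorem \ref{t4.4} with $n=6$, $d=3$, $e=2$ together with the rationality of $k(C_2)$. One small caveat: in the case $\pi=\{1\}$ your parenthetical is off --- the fixed field of the involution $u_i\leftrightarrow u_{i+3}$ is strictly larger than the field generated by the elementary symmetric functions of the three pairs (that subfield has index $4$ in the fixed field) --- but your primary appeal to Theorem \ref{t2.2}(1), applied with base field $k(u_1,u_4)$ exactly as in the paper, settles that case correctly.
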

\begin{proof}
The proof is similar to that of Theorem \ref{t4.1}. Compare the
proof of Case 5 in Theorem \ref{t3.4}.

Again we will consider the case $\fn{char}k\neq 7$ only.

Write $G=D_7=\langle\sigma,\tau^3\rangle$ and $\zeta=\zeta_7,
\pi=\fn{Gal}(k(\zeta)/k)=\langle\lambda\rangle$.

Define $u_j$ by the same way as in the proof of Theorem \ref{t4.1}.
It remains to show that $k(\zeta)(u_i:1\le i\le
6)^{\langle\tau^3,\lambda\rangle}$ is $k$-rational.

\medskip
\begin{Case}{1} $\pi=\{1\}$, i.e. $\zeta\in k$ and $\lambda=1$. \end{Case}

We find that
$$ \tau^3:u_1\leftrightarrow u_4, u_2\leftrightarrow u_5, u_3\leftrightarrow
u_6. $$

Apply Part (1) of Theorem \ref{t2.2}. We find that $k(u_i:1\le i\le
6)^{\langle\tau^3\rangle}=k(u_1,u_4)^{\langle\tau^3\rangle}(t_1,t_2,t_3,t_4)$
with $\tau^3(t_i)=t_i$ for $1\le i\le 4$. Since
$k(u_1,u_4)^{\langle\tau^3\rangle}=k(u_1+u_4,u_1u_4)$ is
$k$-rational, we are done.

\medskip
\begin{Case}{2} $\pi\simeq C_3$, i.e. we may assume that $\lambda(\zeta)=\zeta^2$. \end{Case}

We find that
\begin{align*}
\tau^3 &: \zeta\mapsto \zeta, ~ u_1\leftrightarrow u_4,~
u_2\leftrightarrow u_5,~ u_3\leftrightarrow
u_6, \\
\lambda &: \zeta \mapsto \zeta^2,~ u_1\mapsto u_5\mapsto
u_3\mapsto u_1,~ u_2\mapsto u_6\mapsto u_4\mapsto u_2.
\end{align*}

Note that $\langle\tau^3\lambda\rangle=\langle\tau^3,\lambda\rangle$
and
$$
\tau^3\lambda: \zeta \mapsto \zeta^2,~ u_1\mapsto u_2\mapsto
u_3\mapsto u_4 \mapsto u_5\mapsto u_6\mapsto u_1.
$$

Apply Theorem \ref{t4.4} and Theorem \ref{t2.7}. We find that
$k(\zeta)(u_i:1\le i\le 6)^{\langle\tau^3\lambda\rangle}$ is
$k$-rational.

\medskip
\begin{Case}{3} $\pi\simeq C_2$, i.e. $\lambda(\zeta)=\zeta^{-1}$.
\end{Case}

We find that
\begin{align*}
\tau^3 &: \zeta\mapsto \zeta, ~ u_1\leftrightarrow u_4,~
u_2\leftrightarrow u_5,~ u_3\leftrightarrow
u_6, \\
\lambda &: \zeta \mapsto \zeta^{-1}, ~ u_1\leftrightarrow u_4,~
u_2\leftrightarrow u_5,~ u_3\leftrightarrow u_6.
\end{align*}

Consider $k(\zeta)(u_i:1\le i\le 6)^{\langle\tau^3,
\lambda\rangle}=\{ k(\zeta)(u_i:1\le i\le
6)^{\langle\tau^3\lambda\rangle}\}^{\langle\tau^3\rangle}.$ Done.

\medskip
\begin{Case}{4} $\pi\simeq C_6$, i.e. $\lambda(\zeta)=\zeta^{3}$.\end{Case}

We find that
\begin{align*}
\tau^3 &: \zeta\mapsto \zeta, ~ u_1\leftrightarrow u_4,~
u_2\leftrightarrow u_5,~ u_3\leftrightarrow
u_6, \\
\lambda &: \zeta \mapsto \zeta^{3}, ~ u_1\mapsto u_6\mapsto
u_5\mapsto u_4 \mapsto u_3\mapsto u_2\mapsto u_1.
\end{align*}

Note that $\tau^3\lambda^3(\zeta)=\zeta^{-1}$ and
$\tau^3\lambda^3(u_i)=u_i$ for $1\le i\le 6$. We have

\begin{align*}
&k(\zeta)(u_i:1\le i\le 6)^{\langle\tau^3,
\lambda\rangle}=k(\zeta)(u_i:1\le i\le 6)^{\langle\tau^3\lambda^3,
\lambda\rangle}\\
&=\{k(\zeta)(u_i:1\le i\le
6)^{\langle\tau^3\lambda^3\rangle}\}^{\langle\lambda\rangle}=k(\eta)(u_i:1\le
i\le 6)^{\langle\lambda\rangle}
\end{align*}
where $\eta=\zeta+\zeta^{-1}$. Apply Theorem \ref{t4.4} and Theorem
\ref{t2.7}.
\end{proof}
%----------------------t4.3
\begin{theorem}\label{t4.3}
Let $k$ be any field, $G_{21}$ be the group in Definition 3.1. Let
$G_{21}=\langle\sigma,\tau^2\rangle$ act on the rational function
field $k(x_i:0\le i\le 6)$ by $\sigma:x_i\mapsto x_{i+1},
\tau^2:x_i\mapsto x_{2i}$ for $0\le i\le 6$. Then $k(x_i:0\le i\le
6)^{G_{21}}$ is $k$-rational.
\end{theorem}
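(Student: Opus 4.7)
The plan is to follow the template of the proofs of Theorem~\ref{t4.1} and Theorem~\ref{t4.2}. Since Theorem~\ref{t3.2} already handles $\fn{char}k=7$, I assume $\fn{char}k\ne 7$. Set $\zeta=\zeta_7$ and $\pi=\fn{Gal}(k(\zeta)/k)=\langle\lambda\rangle$, which is one of $\{1\},C_2,C_3,C_6$. Extend the $G_{21}$-action to $k(\zeta)(x_i:0\le i\le 6)$ by letting $\sigma,\tau^2$ fix $\zeta$ and $\lambda$ fix every $x_i$, so that $k(x_i)^{G_{21}}=k(\zeta)(x_i)^{\langle\sigma,\tau^2,\lambda\rangle}$.

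I will reuse the change of variables from the proof of Theorem~\ref{t4.1}: $y_i=\sum_j\zeta^{-ij}x_j$, then $z_0=y_0$, $z_1=y_1^7$, $z_i=y_i/y_1^i$ for $2\le i\le 6$, and finally the six monomials $u_1,\ldots,u_6$ defined there. From that proof one has $k(\zeta)(x_i)^{\langle\sigma\rangle}=k(\zeta)(z_0)(u_1,\ldots,u_6)$ with $\tau:u_1\mapsto u_2\mapsto\cdots\mapsto u_6\mapsto u_1$. Hence $\tau^2$ acts on $\{u_i\}$ as the product of two $3$-cycles $u_1\to u_3\to u_5$ and $u_2\to u_4\to u_6$, while $\lambda$ coincides with a power $\tau^e$ on $\{u_i\}$ determined by $5^e\equiv \bar a\pmod 7$ whenever $\lambda(\zeta)=\zeta^{\bar a}$.

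I will then treat the four subcases separately. (1) $\pi=\{1\}$: apply Part~(1) of Theorem~\ref{t2.2} to peel off $u_2,u_4,u_6$, reducing to $k(u_1,u_3,u_5)^{\langle\tau^2\rangle}\cong k(C_3)$, which is $k$-rational by Theorem~\ref{t2.7}. (2) $\pi\simeq C_6$ with $\lambda(\zeta)=\zeta^3$: here $e=5$, so $\lambda=\tau^{-1}$ on $\{u_i\}$ and $\tau^2\lambda^2$ fixes every $u_i$ while acting on $\zeta$ as $\zeta\mapsto\zeta^2$; its fixed field is $k(\eta)(u_i)$ with $\eta=\zeta+\zeta^2+\zeta^4$, and the residual quotient is cyclic of order $6$, generated by the image of $\lambda$, which cycles the $u_i$'s as a $6$-cycle and sends $\eta\mapsto-1-\eta$, after which Theorem~\ref{t4.4} (with $n=6,\,d=2,\,e=3$) and Theorem~\ref{t2.7} finish the case. (3) $\pi\simeq C_3$ with $\lambda(\zeta)=\zeta^2$: here $e=4$, so $\tau^2\lambda$ fixes every $u_i$ and generates $\pi$ on $\zeta$; its fixed field is $k(u_i)$, on which the residual $C_3$ acts exactly as in Case~(1). (4) $\pi\simeq C_2$ with $\lambda(\zeta)=\zeta^{-1}$: here $e=3$, $\langle\tau^2,\lambda\rangle=\langle\tau^2\lambda\rangle$ is cyclic of order $6$ with $\tau^2\lambda$ cycling the $u_i$'s as a $6$-cycle and sending $\zeta\mapsto\zeta^{-1}$, so Theorem~\ref{t4.4} (with $n=6,\,d=2,\,e=3$) applies directly.

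The only conceptual point is in Cases~(2) and~(3): spotting the nontrivial element of $\langle\tau^2,\lambda\rangle$ that fixes every $u_i$, which allows a preparatory Galois descent before invoking Theorem~\ref{t4.4}. Everything else is routine: computing $e$ in each subcase, verifying the triviality of the chosen element's action on the $u_i$'s, and checking the coprimality hypothesis $\gcd(d,e)=1$ for Theorem~\ref{t4.4}. One might instead attempt to exploit the fact that $G_{21}$ has index~$2$ in $G_{42}$ and bootstrap from Theorem~\ref{t4.1}; however degree-$2$ descents of rational function fields are typically delicate, so the case-by-case approach sketched above seems more direct.
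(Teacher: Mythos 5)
Your proposal is correct and follows essentially the same route as the paper: the same $y_i,z_i,u_i$ change of variables inherited from Theorem \ref{t4.1}, the same case split on $\pi=\fn{Gal}(k(\zeta)/k)$, the same descent trick of locating the element of $\langle\tau^2,\lambda\rangle$ (namely $\tau^2\lambda^2$ or $\tau^2\lambda$) acting trivially on the $u_i$'s, and the same final appeal to Theorem \ref{t4.4} together with Theorem \ref{t2.7} (with Part (1) of Theorem \ref{t2.2} for the easy cases). The only difference is that you spell out the cases $\pi=\{1\}$ and $\pi\simeq C_3$, which the paper omits as "similar" to earlier arguments, and your verifications there are correct.
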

\begin{proof}
The proof is similar to those of Theorem \ref{t4.1} and Theorem
\ref{t4.2}.

We may assume that $\fn{char}k\neq 7$. Write
$G=G_{21}=\langle\sigma,\tau^2\rangle, \zeta=\zeta_7,
\pi=\fn{Gal}(k(\zeta)/k)=\langle\lambda\rangle$. It remains to show
that $k(\zeta)(u_i:1\le i\le 6)^{\langle\tau^2,\lambda\rangle}$ is
$k$-rational where $\tau^2: u_1\mapsto u_3\mapsto u_5\mapsto u_1,~
u_2\mapsto u_4\mapsto u_6\mapsto u_2$.

\medskip
\begin{Case}{1} $\pi\simeq\{1\}$ or $C_3$. \end{Case}

This is similar to Case 1 and Case 3 of Theorem \ref{t4.2}. The
proof is omitted.

\medskip
\begin{Case}{2} $\pi\simeq C_2$, i.e. we may assume that $\lambda(\zeta)=\zeta^{-1}$. \end{Case}

We find that
\begin{align*}
\tau^2 &: \zeta\mapsto \zeta, ~ u_1\mapsto u_3\mapsto u_5\mapsto
u_1,~ u_2\mapsto u_4\mapsto u_6\mapsto u_2, \\
\lambda &: \zeta \mapsto \zeta^{-1}, ~u_1\leftrightarrow u_4,~
u_2\leftrightarrow u_5,~ u_3\leftrightarrow u_6 .
\end{align*}

Since $\langle\tau^2\lambda\rangle=\langle\tau^2,\lambda\rangle$ and
$$
\tau^2\lambda: \zeta\mapsto \zeta^{-1}, ~ u_1\mapsto u_6\mapsto
u_5\mapsto u_4\mapsto u_3\mapsto u_2\mapsto u_1,
$$
we may apply Theorem \ref{t4.4} and Theorem \ref{t2.7}. Done.

\medskip
\begin{Case}{3} $\pi\simeq C_6$, i.e. $\lambda(\zeta)=\zeta^{3}$. \end{Case}

We find that
\begin{align*}
\tau^2 &: \zeta\mapsto \zeta, ~ u_1\mapsto u_3\mapsto u_5\mapsto
u_1,~ u_2\mapsto u_4\mapsto u_6\mapsto u_2, \\
\lambda &: \zeta \mapsto \zeta^{3}, ~ u_1\mapsto u_6\mapsto
u_5\mapsto u_4\mapsto u_3\mapsto u_2\mapsto u_1 .
\end{align*}

Thus $k(\zeta)(u_i:1\le i\le 6)^{\langle\tau^2,
\lambda\rangle}=\{k(\zeta)(u_i:1\le i\le
6)^{\langle\tau^2\lambda^2\rangle}\}^{\langle\lambda\rangle}$\\
$=\{k(\zeta)^{\langle\lambda^{2}\rangle}(u_i:1\le i\le
6)\}^{\langle\lambda\rangle}$. Apply Theorem \ref{t4.4} and Theorem
\ref{t2.7}.
\end{proof}

%----------------------d4.8
\begin{defn}\label{d4.8}
Note that $PSL_2(\bm{F}_7) \simeq GL_3(\bm{F}_2)$ is the unique
simple group of order $168$. Moreover, $GL_3(\bm{F}_2)\simeq
PGL_3(\bm{F}_2)$ is the automorphism group of the projective plane
over $\bm{F}_2$, which consists of 7 points. Thus
$PSL_2(\bm{F}_7)$ may be presented as a permutation group of
degree $7$. Define $G_{168}= \langle\sigma, \tau \rangle \subset
S_7$ by $\sigma =(1,2,3,4,5,6,7)$ and $\tau = (2,3)(4,7)$. It is
not difficult to show that $PSL_2(\bm{F}_7) \simeq G_{168}$.

\end{defn}

%----------------------t4.5
\begin{theorem}\label{t4.5}
Let $k$ be any field such that $\fn{char}k=0$ and $\sqrt{-7}\in
k$. Let $G_{168}=\langle\sigma,\tau\rangle$ be the group in
Definition \ref{d4.8}, which acts on the rational function field
$k(x_i:1\le i\le 7)$ by $\sigma:x_1\mapsto x_{2}\mapsto
x_{3}\mapsto x_{4}\mapsto x_{5}\mapsto x_{6}\mapsto x_{7}\mapsto
x_{1},$ $\tau:x_2\leftrightarrow x_3, x_4\leftrightarrow x_7,
x_1\mapsto x_1, x_5\mapsto x_5, x_6\mapsto x_6$. Then $k(x_i:1\le
i\le 7)^{\langle\sigma,\tau\rangle}$ is $k$-rational.
\end{theorem}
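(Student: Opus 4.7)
The plan is to first reduce to the $6$-dimensional subrepresentation inside the permutation module, and then combine Kemper's Theorem \ref{t2.9} with the no-name lemma. Compared with Theorems \ref{t4.1}--\ref{t4.3}, the essential new difficulty is that $\tau=(2,3)(4,7)$ does not lie in the normalizer of $\langle\sigma\rangle$, so the Fourier-transform device used before is inadequate; Kemper's rationality of a $3$-dimensional faithful representation plays the role that Theorem \ref{t2.7} played in the earlier proofs.

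First, the sum $s=x_1+\cdots+x_7$ is $G$-invariant, and the permutation module splits as $V_7=k\cdot s\oplus V_6$ where $V_6=\{v:\sum_i v_i=0\}$. A character computation on the six conjugacy classes of $G=PSL_2(\bm{F}_7)$ (orders $1,2,3,4,7A,7B$) gives $\chi_{V_6}=(6,2,0,0,-1,-1)$, the absolutely irreducible $6$-dimensional character; moreover, using $\chi_{\mathrm{Sym}^2 V_3}(g)=\tfrac{1}{2}(\chi_3(g)^2+\chi_3(g^2))$ one checks that $V_6\cong\mathrm{Sym}^2 V_3$ as $G$-modules, where $V_3$ is the faithful $3$-dimensional representation provided by Theorem \ref{t2.9}. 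Hence $k(x_1,\ldots,x_7)^G=k(V_6)^G(s)$, and it suffices to prove $k(V_6)^G$ is $k$-rational.

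Next, apply the no-name lemma (Theorem \ref{t2.2}(1)) to $k(V_6\oplus V_3)^G$ in two ways. Both $V_6$ and $V_3$ are faithful irreducible representations of the simple group $G$. With base $L=k(V_3)$ there exist $G$-invariants $u_1,\ldots,u_6\in k(V_6\oplus V_3)$ with $k(V_6\oplus V_3)^G=k(V_3)^G(u_1,\ldots,u_6)$; by Theorem \ref{t2.9}, $k(V_3)^G$ is $k$-rational in three transcendentals, so $k(V_6\oplus V_3)^G$ is $k$-rational in nine transcendentals. With base $L=k(V_6)$ instead, we obtain $G$-invariants $w_1,w_2,w_3$ with $k(V_6\oplus V_3)^G=k(V_6)^G(w_1,w_2,w_3)$. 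Combining, $k(V_6)^G(w_1,w_2,w_3)$ is $k$-rational, establishing stable $k$-rationality of $k(V_6)^G$.

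The main obstacle is upgrading this stable $k$-rationality to actual $k$-rationality, since in general $L(t_1,t_2,t_3)$ being $k$-rational does not imply $L$ is. To close the gap I would trace the no-name construction explicitly: the $w_i$ are $k(V_6)$-linear combinations of three coordinates of $V_3$, and using the isomorphism $V_6\cong\mathrm{Sym}^2 V_3$ together with Kemper's concrete rational generators of $k(V_3)^G$ one should be able to exhibit six free generators of $k(V_6)^G$ directly. An alternative route would exploit the index-$8$ inclusion $G_{21}\subset G_{168}$ (with $G_{21}$ the normalizer of the Sylow $7$-subgroup) together with the $k$-rationality of $k(x_1,\ldots,x_7)^{G_{21}}$ from Theorem \ref{t4.3}, though the non-normality of $G_{21}$ in $G_{168}$ means further device is required.
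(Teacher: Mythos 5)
Your reduction to the $6$-dimensional summand $V_6$ and the two applications of the no-name lemma to $k(V_6\oplus V_3)$ are correct as far as they go, but they prove only that $k(V_6)^G(w_1,w_2,w_3)$ is $k$-rational, i.e.\ the \emph{stable} $k$-rationality of $k(V_6)^G$ and hence of $k(x_1,\ldots,x_7)^G=k(V_6)^G(s)$. This falls short of the statement, and you acknowledge the gap yourself; the two suggestions offered to close it are not arguments. ``Tracing the no-name construction explicitly'' cannot by itself yield six free generators of $k(V_6)^G$, since the invariants $w_1,w_2,w_3$ involve the coordinates of $V_3$ and do not lie in $k(V_6)$; the isomorphism $V_6\simeq\mathrm{Sym}^2V_3$, while true, is never used in a substantive way and does not obviously transfer rationality of $k(V_3)^G$ to $k(\mathrm{Sym}^2V_3)^G$; and the route through $G_{21}\subset G_{168}$ is explicitly left open (``further device is required''). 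So the critical step --- passing from stable rationality to rationality --- is missing.

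The paper avoids this issue by matching transcendence degrees \emph{before} invoking the no-name-type lemma, working with projectivized fields rather than the full linear representations. By Theorem \ref{t2.3} one first peels off invariant variables on both sides: with $x_0=\sum_i x_i$ and $y_i=x_i-x_0/7$, one has $k(x_i:1\le i\le 7)^G=k(y_i/y_6:1\le i\le 5)^G(t,x_0)$, and likewise $k(V_3)^G=k(z_1/z_3,z_2/z_3)^G(w)$ for Kemper's representation $V_3$ with dual coordinates $z_1,z_2,z_3$. Then Part (2) of Theorem \ref{t2.2} is applied twice to the single field $k(y_i/y_6:1\le i\le 5)(z_1/z_3,z_2/z_3)$, whose transcendence degree is $5+2=7$, exactly that of $k(x_i:1\le i\le 7)^G$: taking $L=k(y_i/y_6:1\le i\le 5)$ gives that its $G$-fixed field equals $k(y_i/y_6:1\le i\le 5)^G(s_1,s_2)\simeq k(y_i/y_6:1\le i\le 5)^G(t,x_0)=k(x_i:1\le i\le 7)^G$, while taking $L=k(z_1/z_3,z_2/z_3)$ gives that it equals $k(z_1/z_3,z_2/z_3)^G(v_1,\ldots,v_5)=k(V_3)^G(v_2,\ldots,v_5)$, which is $k$-rational by Theorem \ref{t2.9}. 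Because the dimensions agree on the nose, no cancellation of redundant variables is needed and genuine rationality follows. To repair your argument you would have to replace $k(V_6\oplus V_3)$ by this mixed ``projectivized'' field (equivalently, quotient each linear action by scalars via Theorem \ref{t2.3} first); with the linear representations alone your method cannot reach the stated conclusion.
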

\begin{proof}
Write $G=G_{168}$.

We will apply similar techniques as in the proof of Theorem
\ref{t3.2} to solve the rational problem of the present situation.

Define $x_0=\sum_{1\le i\le 7}x_i, y_i=x_i-(x_0/7)$ for $1\le i\le
7$. Then $k(x_i:1\le i\le 7)^{\langle\sigma,\tau\rangle}=k(y_i:1\le
i\le 7)^{\langle\sigma,\tau\rangle}(x_0)$ with $\sum_{1\le i\le
7}y_i=0$. Note that $\sigma$ and $\tau$ act linearly on $\sum_{1\le
i\le 7}k\cdot y_i=\bigoplus_{1\le i\le 6}k\cdot y_i$. Apply Theorem
\ref{t2.3}. We get $k(y_i:1\le i\le
6)^{\langle\sigma,\tau\rangle}=k(y_i/y_6:1\le i\le
5)^{\langle\sigma,\tau\rangle}(t)$ with $\sigma(t)=\tau(t)=t$. In
conclusion, $k(x_i:1\le i\le
7)^{\langle\sigma,\tau\rangle}=k(y_i/y_6:1\le i\le
5)^{\langle\sigma,\tau\rangle}(t, x_0)$ with $\sigma(t)=\tau(t)=t,
\sigma(x_0)=\tau(x_0)=x_0$.

Since $\sqrt{-7}\in k$, choose the faithful representation
$G\rightarrow GL(V)$ with $\dim_kV=3$ in Theorem \ref{t2.9}. Let
$z_1, z_2, z_3$ be a basis of $V^{*}$,  the dual space of $V$. Then
$k(V)=k(z_1, z_2, z_3)$. Thus $G$ acts faithfully on $k(z_1/z_3,
z_2/ z_3)$ because $G$ is a simple group.

Apply Part (2) of theorem \ref{t2.2} to $k(y_i/y_6: 1\le i\le
5)(z_1/z_3, z_2/ z_3)$. We have that $k(y_i/y_6: 1\le i\le
5)(z_1/z_3, z_2/ z_3)^G=k(y_i/y_6: 1\le i\le 5)^G(s_1, s_2)$ with
$\sigma(s_i)=\tau(s_i)=s_i$ for $1\le i\le 2$. Thus $k(y_i/y_6: 1\le
i\le 5)^G(s_1, s_2)\simeq k(y_i/y_6: 1\le i\le 5)^G(t, x_0)=k(x_i:
1\le i\le 7)^G$.

On the other hand, apply Part (2) of Theorem \ref{t2.2} to
$k(y_i/y_6: 1\le i\le 5)(z_1/z_3, z_2/ z_3)^G$ with $L=k(z_1/z_3,
z_2/ z_3)$. We get $k(y_i/y_6: 1\le i\le 5)(z_1/z_3, z_2/
z_3)^G=k(z_1/z_3, z_2/ z_3)^G(v_j: 1\le j\le 5)$ with
$\sigma(v_j)=\tau(v_j)=v_j$ for $1\le j\le 5$.

By Theorem \ref{t2.3}, $k(V)^G=k(z_1, z_2, z_3)^G=k(z_1/z_3, z_2/
z_3)^G(w)$ for some $w$ with $\sigma(w)=\tau(w)=w$.

Hence $k(x_i:1\le i\le 7)^G\simeq k(y_i/y_6: 1\le i\le 5)(z_1/z_3,
z_2/ z_3)^G\simeq k(z_1/z_3, z_2/ z_3)^G(v_j: 1\le j\le 5)\simeq
k(z_1/z_3, z_2/ z_3)^G(w)(v_j: 2\le j\le 5)=k(V)^G(v_j: 2\le j\le
5)$. Since $k(V)^G$ is $k$-rational by Theorem \ref{t2.9}, we find
that $k(x_i: 1\le i\le 7)^G$ is $k$-rational.
\end{proof}

\bigskip
Proof of Theorem \ref{t1.4} ---------------

The transitive subgroups of $S_7$, up to conjugation, are
$$C_7, D_7, G_{21}, G_{42}, G_{168}, A_7\hbox{ and } S_7$$
where $G_{168}$ is the group in Definition \ref{d4.8}, and
$G_{42}, G_{21}$ are the group $G_{pd}$ in Definition 3.1 with
$p=7$ \cite[p. 60]{DM}.

If $G=C_7$, the rationality of $k(x_i: 1\le i\le 7)^G$ follows
from Theorem \ref{t2.7}. If $G=D_7, G_{21}, G_{42}$, the
rationality problem follows from Theorem \ref{t4.1}, Theorem
\ref{t4.2} and Theorem \ref{t4.3}. When $G=S_7$, the rationality
problem is easy.

When $G=G_{168}$ and $\bm{Q}(\sqrt{-7})\subseteq k$, the rationality
problem follows form Theorem \ref{t4.5}.\hfill$\blacksquare$

\bigskip
Proof of Theorem \ref{t1.5} ---------------

The transitive solvable subgroups of $S_{11}$ are conjugate to
subgroups of $G_{11\cdot10}=\langle\sigma,\tau\rangle$ be the
group $G_{p(p-1)}$ in Definition 3.1 with $p=11$ \cite[p.117,
Proposition 11.6; DM, p.91, Exercise 3.5.1]{Co}..

Let $G_{11\cdot10}$ act on the rational function field $k(x_i:0\le
i\le 10)$ by $\sigma:x_i\mapsto x_{i+1}, \tau:x_i\mapsto x_{2i}$
for $0\le i\le 10$. For any divisor $d$ of $10$, write $10=de$.
Define $G=\langle\sigma,\tau^e\rangle$. We will prove that the
fixed field $k(x_i:0\le i\le 10)^G$ is $k$-rational.

The case $\fn{char}k=11$ follows from Theorem \ref{t3.2}.

From now on, we may assume that $\fn{char}k\neq11$. Write
$\zeta=\zeta_{11}$ where $\zeta_{11}$ is a primitive $11$th root
of unity. Define $\pi=\fn{Gal}(k(\zeta)/k)=\langle\lambda\rangle$.
Then $\pi\simeq\{1\}, C_2, C_5$ or $C_{10}$.

The proof is similar to those of Theorem \ref{t4.2}, Theorem
\ref{t4.3}. We will indicate only the key ideas here.

For $0\le i\le 10$, define
$$y_i=\sum_{0\le i\le 10}\zeta^{-ij}x_i.$$

It follows that $\sigma(y_i)=\zeta^iy_i, \tau(y_i)=y_{6i},
\tau^e(y_i)=y_{6^ei}$.

We get $k(x_i:0\le i\le 10)^G=\{k(\zeta)(x_i:0\le i\le
10)\}^{\langle\sigma, \tau^e, \lambda\rangle}=\{k(\zeta)(y_i:0\le
i\le 10)\}^{\langle\sigma, \tau^e,
\lambda\rangle}=\{k(\zeta)(z_i:0\le i\le 10)\}^{\langle\tau^e,
\lambda\rangle}$ where $z_0=y_0, z_1=y_1^{11}, z_i=y_i/y^i_1$ for
$2\le i\le 10$.

It is not difficult to verify that
\begin{align*}
\tau:&z_0\mapsto z_0,~ z_1\mapsto z_1^6 z_6^{11}, ~ z_2\mapsto
1/(z_1z_6^2),~ z_3\mapsto z_7/(z_1z_6^3),~ z_4\mapsto z_2/(z_1^2
z_6^4), \\
&z_5\mapsto z_8/(z_1^2 z_6^5), z_6\mapsto z_3/(z_1^3 z_6^6),
z_7\mapsto z_9/(z_1^3 z_6^7), z_8\mapsto z_4/(z_1^4 z_6^8),\\
&z_9\mapsto z_{10}/(z_1^4 z_6^9),z_{10}\mapsto z_5/(z_1^5 z_6^{10}).
\end{align*}

Define $u_1=z_2/z_3, u_i=\tau^{i-1}(u_1)$ for $2\le i\le 10$.
Explicitly, the exponents of $u_j$ in terms of $z_i$ (where $1 \le
i,j \le 10$) is represented as the following matrix

$$A=\left(
    \begin{array}{rrrrrrrrrr}
0& 0 & 0 & 0 & 1 & 1 & 1 & 1 & \ \ 1 & 0 \\
1& 0 & 0 & 0 & 0 & 0 & 0 &-1 & 0 & 1 \\
-1& 0 & 1 & 1 & 0 & 0 & 0 & 0 & 0 & 0 \\
0& 0 & 0 & 0 & 0 & 0 &-1 & 0 & 1 & 1 \\
0&  0 & 0 & 0 &-1 & 0 & 1 & 1 & 0 & 0 \\
0&  1 & 1 & 0 & 0 & 0 & 0 & 0 & 0 &-1 \\
0& -1 & 0 & 1 & 1 & 0 & 0 & 0 & 0 & 0 \\
0&  0 & 0 & 0 & 0 &-1 & 0 & 1 & 1 & 0 \\
0&  0 &-1 & 0 & 1 & 1 & 0 & 0 & 0 & 0 \\
0&  0 & 0 &-1 & 0 & 1 & 1 & 0 & 0 & 0\\
    \end{array}
  \right).
$$
For example, the first column of $A$ denotes $u_1=z_2/z_3$.

Since det$A=1$, we find that $k(z_i:1\le i\le 10)=k(u_i:1\le i\le
10)$. The action of $\tau$ is given by
\begin{equation}\label{4.2}
    \tau: u_1\mapsto u_2\mapsto\cdots\mapsto u_{10}\mapsto u_1.
\end{equation}

\medskip
\begin{Case}{1} $\pi\simeq C_{10}$. We may assume that $\lambda(\zeta)=\zeta^{2}$. \end{Case}

It follows that
$$\lambda: \zeta\mapsto \zeta^2, y_i\mapsto y_{2i}.$$

Thus $\tau\lambda(y_i)=y_i$ for $0\le i\le 10$. Hence we find that
$$
\lambda: \zeta\mapsto \zeta^2, u_1\mapsto u_{10}\mapsto u_9\mapsto
u_8\mapsto\cdots\mapsto u_{2}\mapsto u_1.
$$

Now we have $k(\zeta)(u_i:1\le i\le 10)^{\langle\tau^e,
\lambda\rangle}=k(\zeta)(u_i:1\le i\le 10)^{\langle\tau^e\lambda^e,
\lambda\rangle}=\{k(\zeta)^{\langle\lambda^e\rangle}(u_i:1\le i\le
10)\}^{\langle\lambda\rangle}$. Apply Theorem \ref{t4.4} and Theorem
\ref{t2.7}.

\medskip
\begin{Case}{2} $\pi\simeq C_{5}$, i.e. We may assume that $\lambda(\zeta)=\zeta^{4}$. It follows that
$\lambda(y_i)=y_{4i}$.\end{Case}

We will consider $k(\zeta)(u_i:1\le i\le 10)^{\langle\tau^e,
\lambda\rangle}$.

\medskip
\begin{Case}{2.1} $e=1$.\end{Case}

We find that $k(\zeta)(u_i:1\le i\le 10)^{\langle\tau,
\lambda\rangle}= \{k(\zeta)(u_i:1\le i\le 10)
^{\langle\tau^2\lambda\rangle}\}^{\langle\tau\rangle}=k(u_i:1\le
i\le 10)\}^{\langle\tau\rangle}$ is $k$-rational by Theorem
\ref{t2.7}.

\medskip
\begin{Case}{2.2} $e=5, 2, 10$.\end{Case}

Consider the case $e=5$ first.

Since $(\tau^5)^2=1$ and $[k(\zeta):k]=5$, we find that
$\langle\tau^5\lambda\rangle=\langle\tau^5,\lambda\rangle$. We may
apply Theorem \ref{t4.4} and Theorem \ref{t2.7}.

The remaining situations are similar.

\medskip
\begin{Case}{3} The proof when $\pi\simeq C_{2}$ is similar to the case when $\pi\simeq
C_5$. Thus the proof is omitted.\end{Case}

\medskip
\begin{Case}{4} $\pi\simeq \{1\}$, i.e. $\zeta\in k$. We may apply
Theorem \ref{t2.7} directly (with the aid of Theorem \ref{t2.2}
when necessary).\end{Case}\hfill$\blacksquare$

\begin{remark}
The above theorem may be generalized to the case of other prime
numbers  $p$, provided that we can change  the variables
$\{z_i:1\le i\le p-1\}$ to the variables $\{u_i:1\le i\le p-1\}$
such that the formula (\ref{4.2}) is valid, i.e. $\tau:u_1\mapsto
u_2\mapsto\cdots\mapsto u_{p-1}\mapsto u_1$. This is the case for
prime numbers $p \le 41$ by Samson Breuer \cite{Br}.

However, this condition is not met for all prime numbers. For
example, if $p=47$ and the above condition is satisfied, then
$\bm{Q}(x_i:0\le i\le 46)^{\langle\sigma\rangle}$ is
$\bm{Q}$-rational, which is impossible because of Swan's
counter-example \cite{Le,Sw}.
\end{remark}
\newpage
%----------------------------------------References
\renewcommand{\refname}{\centering{References}}


\begin{thebibliography}{AHK}

\bibitem[AHK]{AHK}
H. Ahmad, M. Hajja and M. Kang, \textit{Rationality of some projective linear actions},
J. Algebra 228 (2000), 643--658.

\bibitem[Br]{Br}
S. Breuer, \textit{Zyklische Minimalbasis zusammengesetzten
Grades}, J. reine angew. Math. 166 (1932), 54--58.

\bibitem[Bu]{Bu}
W. Burnside,
\textit{The alternating functions of three and of four variables},
Messenger of Math. 37 (1907), 165--166.

\bibitem[CHK]{CHK}
H. Chu, S.-J. Hu and M. Kang,
\textit{A rationality problem of Certain $A_4$ actions},
in ``Recent Developments in Algebra and Related Areas" edited by C. Dong and F. Li,
Higher Education Press and International Press, Beijing-Boston, 2009.

\bibitem[Co]{Co}
P. M. Cohn, Algebra vol.2, 2nd edition, John Wiley \& Sons, New York, 1982.

\bibitem[DM]{DM}
J.D. Dixon and B. Mortimer, Permutation groups, GTM vol. 163,
Springer-Verlag, New York, 1996.

\bibitem[GMS]{GMS}
S. Garibaldi, A. Merkurjev and J. P. Serre,
\textit{Cohomological invariants in Galois cohomology},
AMS Univ. Lecture Series, vol. 28,
Amer. Math. Soc., Providence, RI, 2003.

\bibitem[HK1]{HK1}
M. Hajja and M. Kang,
\textit{Finite group actions on rational function fields}, J. Algebra 149 (1992), 139--154.

\bibitem[HK2]{HK2}
M. Hajja and M. Kang,
\textit{Three-dimensional purely monomial group actions},
J. Algebra 170 (1994), 850--860.

\bibitem[HK3]{HK3}
M. Hajja and M. Kang, \textit{Some actions of symmetric groups},
J. Algebra 177 (1995), 511--535.

\bibitem[HKY]{HKY}
A. Hoshi, H. Kitayama and A. Yamasaki, \textit{Rationality problem
of three-dimensional monomial group actions}, J. Algebra 341
(2011), 45--108.


\bibitem[HR]{HR}
A. Hoshi and Y. Rikuna,
\textit{Rationality problem of three-dimensional purely monomial group actions: the last case},
Math. Comp. 77 (2008), 1823--1829.

\bibitem[Ka]{Ka}
M. Kang, \textit{Rationality problem of $GL_4$ group actions},
Advances in Math. 181 (2004), 321--352.

\bibitem[Ke]{Ke}
G. Kemper, \textit{A constructive approach to Noether's problem},
Manuscripta math. 90 (1996), 343--363.

\bibitem[KP]{KP}
M. Kang and B. Plans, \textit{Reduction theorems for Noether's
problem}, Proc. Amer. Math. Soc. 137 (2009), 1867--1874.

\bibitem[KY]{KY}
H. Kitayama and A. Yamasaki,
\textit{The rationality problem for four-dimensional linear actions},
J. Math. Kyoto Univ. 49 (2009), 359--380.

\bibitem[KZ]{KZ}
M. Kang and J. Zhou, \textit{The rationality problem for finite subgroups of $GL_4(\bm{Q})$},
J. Algebra 368 (2012), 53--69.

\bibitem[Le]{Le}
H. W. Lenstra Jr.,
\textit{Rational functions invariant under a finite abelian group},
Invent. Math. 25 (1974), 299--325.

\bibitem[Ma]{Ma}
T. Maeda, \textit{Noether's problem for $A_5$},
J. Algebra 125 (1989), 418--430.

\bibitem[No]{No}
E. Noether,
\textit{Rationale Funktionenk\"orper}, Jber. Deutsch. Math.-Verein. 22 (1913), 316--319.

\bibitem[Sa]{Sa}
D. J. Saltman, \textit{Generic Galois extensions and problems in
field theory}, Advances in Math. 43 (1982), 250--283.

\bibitem[Sw]{Sw}
R. G. Swan, \textit{Noether's problem in Galois theory}, in ``Emmy
Noether in Bryn Mawr", edited by B. Srinivasan and J. Sally,
Springer-Verlag, Berlin, 1983, pp.~21--40.

\bibitem[TW]{TW}
J. A. Tyrrell and C. M. Williams,
\textit{On the pure transcendence of certain fields of rational functions},
Bull. London Math. Soc. 1 (1969), 75--78.

\bibitem[Ya]{Ya}
A. Yamasaki, \textit{Negative solutions to three-dimensional
monomial Noether problem}, J. Algebra 370 (2012), 46--78.

\end{thebibliography}
\end{document}